\documentclass[11pt, a4paper, reqno]{article}

\usepackage[margin=2.5cm]{geometry}

\usepackage[utf8]{inputenc}
\usepackage[T1]{fontenc}
\usepackage[english]{babel}

\usepackage{amsmath, amsthm, amssymb}

\usepackage{authblk}

\usepackage[colorlinks=true, linkcolor=blue, citecolor=blue, urlcolor=blue]{hyperref}

\usepackage{orcidlink}

\def\esup{\operatornamewithlimits{ess\,sup}}
\def\Id{\operatorname{I}}
\def\Ces{\operatorname{Ces}}
\def\LM{\operatorname{LM}}
\def\Cop{\operatorname{Cop}}
\def\dual{\,^{^{\bf c}}\!}

\def\loc{\operatorname{loc}}

\allowdisplaybreaks 

\numberwithin{equation}{section}

\newtheorem{theorem}{Theorem}[section]

\theoremstyle{definition}

\theoremstyle{remark}
\newtheorem{remark}[theorem]{Remark}

\begin{document}

\title{On Weighted Local Morrey-type and Complementary Local Morrey-Type Spaces}

\author[1]{Amiran Gogatishvili \orcidlink{0000-0003-3459-0355}}
\author[2]{Yoshihiro Sawano \orcidlink{0000-0003-2844-8053}}
\author[3]{Tu\u{g}\c{c}e \"{U}nver \orcidlink{0000-0003-0414-8400}}

\affil[1]{\small Institute of Mathematics of the Czech Academy of Sciences, \v Zitn\'a~25, 115~67 Praha~1, Czech Republic \protect\\ \texttt{gogatish@math.cas.cz}}
\affil[2]{\small Department of Mathematics, Chuo University, 1-13-27 Kasuga, Bunkyo-ku, Tokyo, 112-8551, Japan \protect\\ \texttt{yoshihiro-sawano@celery.ocn.ne.jp}}
\affil[3]{\small Department of Mathematics,
Kirikkale University,
71450 Yahsihan, Kirikkale,
T\"{u}rkiye}
\affil[*]{\small Corresponding author\; \texttt{tugceunver@kku.edu.tr} }

\renewcommand\Authands{ and }

\date{}

\maketitle

\noindent\textbf{This paper is dedicated to Professor Victor Burenkov on the occasion of his 85th birthday.}

\begin{abstract}
In this paper, we establish necessary and sufficient conditions for continuous embeddings between weighted local Morrey-type spaces and weighted complementary local Morrey-type spaces. Rather than relying on standard multidimensional approaches, our methodology is based on a one-dimensional reduction. Specifically, we reduce the multidimensional embedding problems to corresponding weighted inequalities involving one-dimensional Ces\`{a}ro and Copson function spaces. Applying this reduction alongside recent results in the characterization of the embeddings between Ces\`{a}ro and Copson spaces, we provide complete characterizations for the embeddings \[
\LM_{p_1,q_1}(v_1, w_1) \hookrightarrow \LM_{p_2,q_2}(v_2, w_2)\] 
and 
\[\dual\LM_{p_1,q_1}(v_1, w_1) \hookrightarrow \LM_{p_2,q_2}(v_2, w_2).\] 
Our results cover all possible finite cases of the parameters $0 < p_1, p_2, q_1, q_2 < \infty$.
\end{abstract}

\vspace{2mm}
\noindent \textbf{Keywords:} Local Morrey-type spaces, complementary local Morrey-type spaces, Ces\`{a}ro spaces, Copson spaces, embedding theorems, reduction theorem.

\vspace{2mm}
\noindent \textbf{2020 Mathematics Subject Classification:} Primary 46E30; Secondary 26D10,46E20, 47B38.

\section{Introduction}

The study of local Morrey-type spaces and their embedding properties plays a crucial role in harmonic analysis and the theory of function spaces. The main purpose of this paper is to characterize various embeddings between weighted local and complementary local Morrey-type spaces.

Before stating our main objectives, let us present some basic notations and definitions used throughout this paper. Consider a nonempty measurable set $A\subset \mathbb{R}^n$ where $n \ge 1$. Let $\mathcal{M}(A)$ and $\mathcal{M}^+(A)$ represent the classes of all measurable functions and all nonnegative measurable functions defined on $A$, respectively. A function defined on $A$ is called a weight if it is measurable, positive, and finite almost everywhere; the set of all such weights is denoted by $\mathcal{W}(A)$.

For $0 \in \mathbb{R}^n$ and $r > 0$, let $B(0,r)$ be the open ball centered at the origin of radius $r$ and $\text{dual } B(0,r) = \mathbb{R}^n \setminus B(0,r)$. Furthermore, for $s > t > 0$, we denote the annulus centered at the origin by $B(0,s) \setminus B(0,t) = \{y \in \mathbb{R}^n : t \le \vert{}y\vert{} < s\}$.

Let $X$ and $Y$ be quasi-normed vector spaces. We say that $X$ is continuously embedded into $Y$, denoted by $X \hookrightarrow Y$, if $X \subset Y$ and the identity operator $\Id : X \to Y$ is bounded. In this case, the norm of the embedding operator is defined by
\begin{equation*}
\|\Id\|_{X \to Y} = \sup_{z \in X \setminus \{0\}} \frac{\|z\|_Y}{\|z\|_X},
\end{equation*}
and the continuous embedding ensures that $\Vert{}z\Vert{}_Y \le \Vert{}\Id\Vert{}_{X \to Y} \Vert{}z\Vert{}_X$ for all $z \in X$. 

Let $p,q \in (0, \infty)$, $w \in \mathcal{W}(0,\infty)$, and $v \in \mathcal{W}(\mathbb{R}^n)$. We denote by $LM_{p,q}(v, w)$ the weighted local Morrey-type space, which consists of all functions $f \in L_{p,v}^{\loc}(\mathbb{R}^n)$ such that
\begin{equation*}
\|f\|_{LM_{p,q}(v, w)} = \bigg(\int_0^{\infty} \bigg( \int_{B(0,t)} |f(x)|^p v(x)^p dx \bigg)^{\frac{q}{p}} w(t)^q dt \bigg)^{\frac{1}{q}} < \infty.
\end{equation*}	
Analogously, $\dual LM_{p,q}(v, w)$ denotes the weighted complementary local Morrey-type space, defined as the collection of all functions $f \in L_{p, v}^{\loc}(\dual B(0, t))$, for every $t>0$, such that
\begin{equation*}
\|f\|_{\dual LM_{p,q}(v, w)} = \bigg(\int_0^{\infty} \bigg( \int_{\dual B(0,t)} |f(x)|^p v(x)^p dx \bigg)^{\frac{q}{p}} w(t)^q dt \bigg)^{\frac{1}{q}} < \infty.
\end{equation*}

Our main aim in this paper is to establish necessary and sufficient conditions for the following embeddings:
\begin{align}
\LM_{p_1,q_1}(v_1, w_1) &\hookrightarrow \LM_{p_2,q_2}(v_2, w_2) \label{LM-LM}\\
\LM_{p_1,q_1}(v_1, w_1) &\hookrightarrow \dual \LM_{p_2,q_2}(v_2, w_2) \label{LM-cLM}\\
\dual\LM_{p_1,q_1}(v_1, w_1) &\hookrightarrow  \LM_{p_2,q_2}(v_2, w_2) \label{cLM-LM}\\
\dual\LM_{p_1,q_1}(v_1, w_1) &\hookrightarrow  \dual\LM_{p_2,q_2}(v_2, w_2) \label{cLM-cLM}.
\end{align}

In recent decades, local Morrey-type spaces and the behavior of classical operators acting on them have been extensively investigated. A comprehensive overview of the development and fundamental results regarding these spaces is available in \cite{Burenkov-survey-1, Burenkov-survey-2}. For a broader perspective on the theory of Morrey spaces, we also refer the reader to the book by D.R. Adams \cite{Adams-book} and the recent two-volume monograph by Y. Sawano, G. Di Fazio, and D.I. Hakim \cite{SawFazHak-Vol1, SawFazHak-Vol2}. Alongside the study of the spaces themselves, the mapping properties of classical operators on these spaces have attracted considerable attention. The literature provides a thorough characterization of operator behavior within local Morrey-type spaces and their counterparts, namely, complementary local Morrey-type spaces. Necessary and sufficient conditions for the boundedness of maximal and fractional maximal operators, Riesz potentials, and singular integral operators within these spaces have been widely established; see, for instance, \cite{BurGul2004, BurGulGul2007-1, BurGulGul2007-2, BurGulTarSer, BurGul2009, BurGogGulMus, GKMT-CVEE, GKMT-JMA}.

The standard method for studying the boundedness of these operators in local Morrey-type spaces relies on H\"{o}lder's inequality. However, estimating via H\"{o}lder's inequality can lead to a loss of sharpness. Consequently, one can achieve more precise results for the boundedness problem by directly applying embedding characterizations between local Morrey-type spaces. Motivated by this, a series of previous works focused on the embedding problems \eqref{LM-LM}--\eqref{cLM-cLM}. In \cite{GMU-EMJ}, the embeddings \eqref{cLM-LM} and \eqref{LM-cLM} were investigated using a duality approach to convert the problem into iterated Hardy inequalities. Later, in \cite{GU-NS}, the embedding \eqref{LM-LM} was reduced to a one-dimensional problem between Ces\`{a}ro function spaces; by applying the one-dimensional characterizations from \cite{Unver}, partial results were obtained. 

A significant limitation in \cite{GU-NS} was that the reduced one-dimensional inequalities were only known under conditions where duality was applicable. Now that the complete characterizations of Ces\`{a}ro and Copson embeddings are available in \cite{GPU-JFA} and \cite{GU-AMP-Ces}, we utilize these unrestricted characterizations in the present work to completely overcome that limitation. Therefore, our approach in this paper relies on reducing the problem at hand to the embedding relations between Ces\`{a}ro and Copson spaces. These spaces can be regarded as the one-dimensional analogues of Morrey-type spaces for functions defined on $(0, \infty)$, and they are defined as follows: 
the weighted Cesàro space $\Ces_{p,q}(v,w)$ is the space of all measurable functions
$f$ on $(0,\infty)$ such that
\begin{equation}\label{eq:cesaro_def}
\|f\|_{\Ces_{p,q}(v,w)}
:=
\left(
\int_0^\infty
\left(
\int_0^t |f(s)|^p v(s)^p\,ds
\right)^{q/p}
w(t)^q\,dt
\right)^{1/q}
<\infty.
\end{equation}
Similarly, the weighted Copson space $\Cop_{p,q}(v,w)$ consists of all measurable
functions $f$ on $(0,\infty)$ satisfying
\begin{equation}\label{eq:copson_def}
\|f\|_{\Cop_{p,q}(v,w)}
:=
\left(
\int_0^\infty
\left(
\int_t^\infty |f(s)|^p v(s)^p\,ds
\right)^{q/p}
w(t)^q\,dt
\right)^{1/q}
<\infty.
\end{equation}

\begin{remark}
Write
\[
\widetilde{v_i}(s) = v_i\left(\frac{s}{|s|^2}\right)\frac{1}{|s|^{\frac{2n}{p_i}}}, \quad \text{and} \quad \widetilde{w_i}(\tau) = w_i\left(\frac{1}{\tau}\right) \frac{1}{\tau^{\frac{2}{q_i}}}
\]
for $i=1,2$.
By the change of variables 
\[
s=\frac{y}{|y|^2}\quad \text{and} \quad t=\frac{1}{\tau}
\]
\eqref{cLM-cLM} is equivalent to the embedding 
\[
\LM_{p_1,q_1}(\widetilde{v_1}, \widetilde{w_1}) \hookrightarrow \LM_{p_2,q_2}(\widetilde{v_2}, \widetilde{w_2}),
\]
and \eqref{LM-cLM} is equivalent to the embedding 
\[
\dual\LM_{p_1,q_1}(\widetilde{v_1}, \widetilde{w_1}) \hookrightarrow \LM_{p_2,q_2}(\widetilde{v_2}, \widetilde{w_2}).
\]
(cf. \cite[Remark~3.3]{GU-NS} and \cite{GMU-EMJ}). This note allows us to concentrate our attention on the characterizations of \eqref{LM-LM} and \eqref{cLM-LM}. Also note that the embeddings \eqref{LM-LM} and \eqref{cLM-LM} hold only for trivial functions if $p_2 > p_1$,
since in this case it is impossible to embed
$L^{p_1}(E)$ into $L^{p_2}(E)$
for any measurable set
$E$
of finite measure.
Therefore, throughout the paper, we will assume, without loss of generality, that $0 < p_2 \le p_1$ (cf. \cite{GMU-EMJ}).
\end{remark} 

\begin{remark}
It is known that the spaces $\LM_{p,q}(v, w)$ and $\dual \LM_{p,q}(v, w)$ are non-trivial, meaning they do not consist solely of functions equivalent to $0$ on $\mathbb{R}^n$, if and only if there exists some $t > 0$ such that
$$\int_t^{\infty} w(s)^q ds < \infty$$
and
$$\int_0^t w(s)^q ds < \infty,$$
respectively, see, for instance \cite{BurGul2004} and \cite{GMU-EMJ}.   
\end{remark}

Throughout the paper, the letters $c$ and $C$ denote generic positive constants that are independent of the main functions and parameters, whose values may change from line to line. Sub-scripted constants, such as $C_1$ or $\mathcal{C}_1$, remain fixed. We use the notation $A \lesssim B$ to state that $A \le \lambda B$ for some positive constant $\lambda$ independent of the essential quantities, and $A \approx B$ means $A \lesssim B \lesssim A$. In particular, when characterizing the norm of an embedding operator, $\Vert{}\Id\Vert{}_{X \to Y} \approx E$ asserts that the norm is bounded from above and below by $E$ up to multiplicative constants. For brevity, the differential elements (such as $dx$ or $dy$) in lengthy integral expressions are occasionally omitted.

The remainder of this paper is organized as follows. Section~\ref{section:Reduction Theorems} is devoted to the reduction theorems, where we investigate the fundamental relationship between embeddings among local Morrey-type spaces and corresponding embeddings among Ces\`{a}ro-type spaces. These reduction results serve as a crucial technical bridge for our subsequent analysis. In Section~\ref{section:Embeddings}, we present and prove our main results. By exploiting the reduction theorems established in Section~\ref{section:Reduction Theorems}, we successfully characterize the continuous embeddings $\LM_{p_1,q_1}(v_1, w_1) \hookrightarrow \LM_{p_2,q_2}(v_2, w_2)$ and $\dual\LM_{p_1,q_1}(v_1, w_1) \hookrightarrow \LM_{p_2,q_2}(v_2, w_2)$ (i.e., the embeddings \eqref{LM-LM} and \eqref{cLM-LM}).

\section{Reduction Theorems}
\label{section:Reduction Theorems}

As mentioned in the introduction, our approach relies on a one-dimensional reduction. In this section we reduce the embeddings between local Morrey-type spaces to the corresponding embeddings between Ces\`{a}ro-type spaces.

Let us briefly recall the polar coordinate formula in $\mathbb{R}^n$.
Denote the unit sphere by $S^{n-1}$. Every $x\in\mathbb{R}^n\setminus\{0\}$
can be written as
\[
x=rx', \qquad r=|x|,\quad x'=\frac{x}{|x|}\in S^{n-1}.
\]
Let $\sigma$ denote the unique Borel measure on $S^{n-1}$ such that
\begin{equation*}
\int_{\mathbb{R}^n} f(x)\,dx
=
\int_0^\infty
\left(
\int_{S^{n-1}} f(rx')\,d\sigma(x')
\right)
r^{n-1}\,dr
\end{equation*}
for every Borel measurable function $f:\mathbb{R}^n\to\mathbb{R}$ that is
either non-negative or integrable. We refer the reader to \cite{Folland} for
further details.

A version of the following theorem, which relates the embeddings between local Morrey-type spaces to the embeddings between Ces\`{a}ro spaces, was originally established in \cite[Theorem~2.1-2.2]{GU-NS}. Here, we reformulate it to suit the needs of the present work.

\begin{theorem} \label{T:Morrey-Cesaro}
Let $0 < p_2 \le p_1 < \infty$ and $0 < q_1, q_2 < \infty$. Assume that $v_1$ and $v_2$ are  weights on $\mathbb{R}^n$. Suppose that $w_1$ and $w_2$ are weights on $(0, \infty)$ such that $ \int_0^t w_i^{q_i} < \infty$, $i=1,2$
for some $t > 0$. 

Write
\begin{equation}\label{def-varphi}
\varphi(t) := 
\begin{cases} 
\displaystyle \left( \int_{S^{n-1}} \left[ v_1(ts')^{-1} v_2(ts') \right]^{\frac{p_1p_2}{p_1-p_2}} d\sigma(s') \right)^{\frac{p_1-p_2}{p_1}} t^{\frac{(n-1)(p_1-p_2)}{p_1}}, & \text{if } 0 < p_2 < p_1, \\[1.5em]
\displaystyle \esup_{s' \in S^{n-1}} \left[ v_1(ts')^{-1} v_2(ts') \right]^{p_1}, & \text{if } p_2 = p_1.
\end{cases}
\end{equation}
Then the following two statements are equivalent:

\textup{(i)} There exists a constant $C>0$ such that	
\begin{align}\label{Morrey-emb}
&\bigg(\int_0^{\infty} \bigg(\int_{B(0,t)} f(x)^{p_2} v_2(x)^{p_2} dx\bigg)^{\frac{q_2}{p_2}} w_2(t)^{q_2} dt 
\bigg)^{\frac{1}{q_2}} \notag \\ 
&\hskip+3cm\leq C \bigg( \int_0^{\infty} \bigg(\int_{B(0,t)} f(x)^{p_1} v_1(x)^{p_1} dx 
\bigg)^{\frac{q_1}{p_1}} w_1(t)^{q_1} dt \bigg)^{\frac{1}{q_1}}
\end{align}
holds for every $f\in \mathcal{M}^+(\mathbb{R}^n)$.

\textup{(ii)} There exists a constant $\mathcal{C}>0$ such that
\begin{equation}\label{Cesaro-emb}
\bigg(\int_0^{\infty} \bigg(\int_0^t g(s)^{p_2} \varphi(s) ds\bigg)^{\frac{q_2}{p_2}} w_2(t)^{q_2} dt 
\bigg)^{\frac{1}{q_2}}  \leq  \mathcal{C} \bigg( \int_0^{\infty} \bigg(\int_0^t g(s)^{p_1}  ds 
\bigg)^{\frac{q_1}{p_1}} 
w_1(t)^{q_1} dt \bigg)^{\frac{1}{q_1}}
\end{equation}
holds for every $g\in \mathcal{M}^+(0,\infty)$.

Moreover, the best constants $C$ and $\mathcal C$
of inequalities \eqref{Morrey-emb} and \eqref{Cesaro-emb}, respectively, satisfy $C \approx \mathcal{C}$.
\end{theorem}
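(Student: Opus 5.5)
The plan is to pass to polar coordinates and treat the radial and angular variables separately. The angular variable is handled by an extremal H\"older-type argument on each sphere, which produces the weight $\varphi$. The radial variable is what remains in \eqref{Cesaro-emb}.

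\textbf{(ii)$\Rightarrow$(i).} Fix $f\in\mathcal{M}^+(\mathbb{R}^n)$ and define the radial function
\[
g(s):=\left(\int_{S^{n-1}} f(sx')^{p_1}v_1(sx')^{p_1}\,d\sigma(x')\right)^{1/p_1}s^{(n-1)/p_1}.
\]
Then $\int_{B(0,t)}f^{p_1}v_1^{p_1}=\int_0^t g^{p_1}$, so the right-hand sides of \eqref{Morrey-emb} and \eqref{Cesaro-emb} coincide. For the left-hand side, write $f^{p_2}v_2^{p_2}=(fv_1)^{p_2}(v_1^{-1}v_2)^{p_2}$ on each sphere $|x|=s$.

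When $p_2<p_1$, apply H\"older's inequality on $S^{n-1}$ with exponents $p_1/p_2$ and $p_1/(p_1-p_2)$. This gives
\[
\int_{S^{n-1}}f(sx')^{p_2}v_2(sx')^{p_2}\,d\sigma\, s^{n-1}\le g(s)^{p_2}\varphi(s),
\]
where the power $s^{(n-1)(p_1-p_2)/p_1}$ in \eqref{def-varphi} is exactly what balances the factor $s^{n-1}$. When $p_2=p_1$, bound $(v_1^{-1}v_2)^{p_1}$ by its essential supremum instead. Integrating over $s\in(0,t)$ and applying (ii) yields (i) with $C\le\mathcal C$.

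\textbf{(i)$\Rightarrow$(ii).} Given $g\ge 0$, build an extremal $f$ for which the H\"older step above becomes an equality, up to a constant. For $p_2<p_1$, set
\[
f(sx')=g(s)\,h(s,x'),\qquad h(s,x')^{p_1}v_1(sx')^{p_1}=\frac{\bigl[v_1^{-1}v_2\bigr]^{\frac{p_1p_2}{p_1-p_2}}(sx')}{s^{n-1}\int_{S^{n-1}}\bigl[v_1^{-1}v_2\bigr]^{\frac{p_1p_2}{p_1-p_2}}(sy')\,d\sigma(y')}.
\]
This choice makes $\int_{S^{n-1}}(fv_1)^{p_1}(sx')\,d\sigma\,s^{n-1}=g(s)^{p_1}$. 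A direct computation then gives $\int_{S^{n-1}}(fv_2)^{p_2}(sx')\,d\sigma\,s^{n-1}=g(s)^{p_2}\varphi(s)$. Inserting this $f$ into (i) gives (ii) with $\mathcal C\le C$.

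\textbf{Main obstacle: degenerate sets.} The construction breaks down at radii $s$ where the spherical integral defining $\varphi(s)$ is $0$ or $\infty$. If $\varphi(s)=0$, set $f=0$ on that sphere; such $s$ contribute nothing to the left side of \eqref{Cesaro-emb}. Where $\varphi=\infty$ on a set of positive measure, I would argue by truncation:
\begin{itemize}
\item replace $v_1^{-1}v_2$ by $\min(v_1^{-1}v_2,N)\chi_{\{|x|<N\}}$ and run the construction;
\item note the resulting truncated $\varphi_N$ increases to $\varphi$;
\item conclude by monotone convergence on the left-hand side, using that the right-hand side is unchanged.
\end{itemize}
Measurability of $g$ and $h$ in $s$ follows from Tonelli's theorem.

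For $p_2=p_1$ the supremum is generally not attained, so a single extremal function does not exist. Here I would proceed as follows:
\begin{itemize}
\item for $\varepsilon>0$ and each $s$, concentrate $f$ on the subset $E_s$ of the sphere where $(v_1^{-1}v_2)^{p_1}>(1-\varepsilon)\varphi(s)$, taking $f(sx')^{p_1}v_1(sx')^{p_1}=g(s)^{p_1}\chi_{E_s}(x')\big/\bigl(\sigma(E_s)\,s^{n-1}\bigr)$;
\item this gives (ii) with constant $C(1-\varepsilon)^{-1/p_1}$;
\item let $\varepsilon\to0$, again truncating where $\varphi=\infty$.
\end{itemize}
The joint measurability of the sets $E_s$ in $(s,x')$ needs care. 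It holds because the essential supremum over the sphere is a measurable function of $s$.

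Combining both directions gives $C\approx\mathcal C$; in fact the argument gives equality $C=\mathcal C$.
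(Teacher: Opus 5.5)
Your proof is correct, but it takes a different route from the paper's proof of this theorem. The paper does not argue directly. It substitutes $(fv_1)^{p_1}\mapsto f$ and rescales to $p=p_2/p_1$, $q=q_2/p_1$, $\theta=q_1/p_1$, with $u=w_2^{q_2}$, $v=v_1^{-p_2}v_2^{p_2}$, $w=w_1^{q_1}$. It then invokes the one-dimensional reduction of \cite{GU-NS} (Theorems~2.1--2.2 there), checks that the weight $v_p$ produced there coincides with $\varphi$, and undoes the substitution. So $C\approx\mathcal C$ is simply inherited from the citation.

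You instead give a self-contained polar-coordinate argument: H\"older's inequality on each sphere for (ii)$\Rightarrow$(i), and an extremal or almost-extremal test function for (i)$\Rightarrow$(ii). This is exactly the scheme the paper uses for the Copson--Ces\`aro analogue, Theorem~\ref{T:cLM-LM-Copson-Cesaro}, except that here $B(0,t)$ replaces $\dual B(0,t)$ on the right-hand side.

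The paper's route is shorter, given the earlier work. Yours buys three things:
\begin{itemize}
\item it yields the sharper identity $C=\mathcal C$ for the best constants;
\item it never uses the hypothesis $\int_0^t w_i^{q_i}<\infty$;
\item it explicitly handles radii where $\varphi$ is $0$ or $\infty$, by truncation and monotone convergence.
\end{itemize}
The paper's argument for Theorem~\ref{T:cLM-LM-Copson-Cesaro} passes over these degenerate radii; for instance, its set $E(r)$ is null when the essential supremum is infinite.

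One detail for the write-up. In the truncated construction for $p_2<p_1$, $v_2$ itself is not truncated, so you only get
\[
s^{n-1}\int_{S^{n-1}}(fv_2)^{p_2}(sx')\,d\sigma(x')\ge g(s)^{p_2}\varphi_N(s)
\]
rather than equality. That inequality is all the argument needs.
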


\begin{proof}
First, observe that replacing the entire expression $(f v_1)^{p_1}$ directly with $f$ does not alter the set of admissible functions in \eqref{Morrey-emb}. If we elevate both sides of \eqref{Morrey-emb} to the power of $p_1$ and re-parameterize the indices as
\begin{equation}\label{eq:new_params}
    p = \frac{p_2}{p_1}, \quad q = \frac{q_2}{p_1}, \quad \theta = \frac{q_1}{p_1},
\end{equation}
while redefining the weights according to
\begin{equation}\label{eq:new_weights}
    u = w_2^{q_2}, \quad v = v_1^{-p_2} v_2^{p_2}, \quad w = w_1^{q_1},
\end{equation}
the original inequality reduces to the following equivalent form:
\begin{equation} \label{mor-ces-1}
\bigg(\int_0^{\infty} \bigg(\int_{B(0,t)} f(x)^{p} v(x) dx\bigg)^{\frac{q}{p}} u(t) dt 
\bigg)^{\frac{1}{q}}  \leq D \bigg( \int_0^{\infty} \bigg(\int_{B(0,t)} f(x) dx 
\bigg)^{\theta} w(t) dt \bigg)^{\frac{1}{\theta}},
\end{equation}
where $D$ is the best constant of \eqref{mor-ces-1}, and $D=C^{p_1}$.

Furthermore, by \cite[Theorem~2.1-2.2]{GU-NS}, inequality \eqref{mor-ces-1} holds if and only if the  inequality
\begin{equation}\label{reduced-ces}
\bigg(\int_0^{\infty} \bigg(\int_0^t g(s)^{p} v_p(s) ds\bigg)^{\frac{q}{p}} u(t) dt 
\bigg)^{\frac{1}{q}}  \leq  c \bigg( \int_0^{\infty} \bigg(\int_0^t g(s) ds 
\bigg)^{\theta} 
w(t)dt \bigg)^{\frac{1}{\theta}}
\end{equation}
holds for every $g\in \mathcal{M}^+(0,\infty)$,
where
\begin{equation} \label{tilde-v}
v_p(t):=
\begin{cases} 
\displaystyle \bigg(\int_{S^{n-1}} v(t s')^{\frac{1}{1-p}} d\sigma(s')\bigg)^{1-p} t^{(n-1)(1-p)}, & \text{if } 0 < p < 1, \\[1.5em]
\displaystyle \esup_{s' \in S^{n-1}} v(ts'), & \text{if } p = 1.
\end{cases}
\end{equation}
Importantly, the best constants of these two inequalities are equivalent, meaning $D \approx c$.

It remains to rewrite \eqref{reduced-ces} in the original parameters. Substituting back $p,q,\theta,u,v,w$ turns \eqref{reduced-ces} into
\begin{equation*} 
\bigg(\int_0^{\infty} \bigg(\int_0^t g(s)^{\frac{p_2}{p_1}} v_p(s) ds\bigg)^{\frac{q_2}{p_2}} w_2(t)^{q_2} dt 
\bigg)^{\frac{p_1}{q_2}} \leq c \bigg( \int_0^{\infty} \bigg(\int_0^t g(s) ds \bigg)^{\frac{q_1}{p_1}} w_1(t)^{q_1}dt \bigg)^{\frac{p_1}{q_1}},
\end{equation*}
(using $\varphi = v_p$ under this substitution, by \eqref{def-varphi} and \eqref{tilde-v}). 
Finally, by taking the $\frac{1}{p_1}$-th power of both sides and replacing $g$ by $g^{p_1}$, we arrive exactly at \eqref{Cesaro-emb}. The constant on the right-hand side becomes $c^{\frac{1}{p_1}}$. By definition, the best constant for this final inequality is $\mathcal{C}$, which implies that $\mathcal{C} = c^{\frac{1}{p_1}}$. Since $c\approx D = C^{p_1}$ , it immediately follows that $C \approx \mathcal{C}$. This completes the proof.
\end{proof}

In the following theorem, we present a reduction result that transforms the embedding between local Morrey-type and complementary local Morrey-type spaces into an embedding between Copson and Ces\`{a}ro spaces.

\begin{theorem} \label{T:cLM-LM-Copson-Cesaro}
Let $0 < p_2 \le p_1 < \infty$ and $0 < q_1, q_2 < \infty$. Assume that $v_1$ and $v_2$ are  weights on $\mathbb{R}^n$. Suppose that $w_1$ and $w_2$ are weights on $(0, \infty)$ such that $\int_0^t w_1^{q_1} < \infty$ and $ \int_t^{\infty} w_2^{q_2} < \infty$ 
for some $t > 0$. Then the following two statements are equivalent:

\textup{(i)} There exists a constant $C>0$ such that	
\begin{align}\label{cLM-LM-emb}
&\bigg(\int_0^{\infty} \bigg(\int_{B(0,t)} f(x)^{p_2} v_2(x)^{p_2} dx\bigg)^{\frac{q_2}{p_2}} w_2(t)^{q_2} dt 
\bigg)^{\frac{1}{q_2}} \notag \\ 
&\hskip+3cm\leq C \bigg( \int_0^{\infty} \bigg(\int_{\dual B(0,t)} f(x)^{p_1} v_1(x)^{p_1} dx 
\bigg)^{\frac{q_1}{p_1}} w_1(t)^{q_1} dt \bigg)^{\frac{1}{q_1}}
\end{align}
holds for every $f\in \mathcal{M}^+(\mathbb{R}^n)$.

\textup{(ii)} There exists a constant $\mathcal{C}>0$ such that
\begin{equation}\label{Copson-Cesaro-emb}
\bigg(\int_0^{\infty} \bigg(\int_0^t g(s)^{p_2} \varphi(s) ds\bigg)^{\frac{q_2}{p_2}} w_2(t)^{q_2} dt 
\bigg)^{\frac{1}{q_2}}  \leq  \mathcal{C} \bigg( \int_0^{\infty} \bigg(\int_t^{\infty} g(s)^{p_1}  ds 
\bigg)^{\frac{q_1}{p_1}} 
w_1(t)^{q_1} dt \bigg)^{\frac{1}{q_1}}
\end{equation}
holds for every $g\in \mathcal{M}^+(0,\infty)$, where $\varphi$ is defined in \eqref{def-varphi}.

Moreover, the best constants of inequalities \eqref{cLM-LM-emb} and \eqref{Copson-Cesaro-emb}, respectively, satisfy $C=\mathcal{C}$.
\end{theorem}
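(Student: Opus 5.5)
We prove both directions directly in polar coordinates. The aim is the exact equality $C=\mathcal{C}$, so we cannot afford to pass through a result that only gives equivalence of constants.

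\emph{Direction (ii) $\Rightarrow$ (i).} Fix $f\in\mathcal{M}^+(\mathbb{R}^n)$ and define the radial profile
\[
g(s)^{p_1} := s^{n-1}\int_{S^{n-1}} f(ss')^{p_1} v_1(ss')^{p_1}\,d\sigma(s').
\]
By the polar coordinate formula,
\[
\int_{\dual B(0,t)} f^{p_1}v_1^{p_1}\,dx=\int_t^\infty g(s)^{p_1}\,ds,
\]
so the right-hand sides of \eqref{cLM-LM-emb} and \eqref{Copson-Cesaro-emb} coincide. On the left-hand side we write
\[
\int_{B(0,t)} f^{p_2}v_2^{p_2}\,dx=\int_0^t s^{n-1}\int_{S^{n-1}} (fv_1)^{p_2}(ss')\,(v_1^{-1}v_2)^{p_2}(ss')\,d\sigma\,ds.
\]
For $p_2<p_1$ we apply Hölder on $S^{n-1}$ with exponents $p_1/p_2$ and $p_1/(p_1-p_2)$. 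The inner sphere integral, multiplied by $s^{n-1}$, is then bounded by $g(s)^{p_2}\varphi(s)$, since the powers of $s$ combine as $s^{(n-1)p_2/p_1}\cdot s^{(n-1)(p_1-p_2)/p_1}=s^{n-1}$. For $p_2=p_1$ the same bound is trivial with the essential supremum. Hence the left-hand side of \eqref{cLM-LM-emb} is at most the left-hand side of \eqref{Copson-Cesaro-emb}, and therefore $C\le\mathcal{C}$.

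\emph{Direction (i) $\Rightarrow$ (ii).} This is the main obstacle: we must realize equality in Hölder for a given $g$. Take $g\in\mathcal{M}^+(0,\infty)$. In the case $p_2<p_1$, set
\[
h(x):=(v_1^{-1}v_2)(x)^{\frac{p_2}{p_1-p_2}},\qquad f(ss'):=g(s)\,v_1(ss')^{-1}\,h(ss')\,\Big(s^{n-1}\int_{S^{n-1}}h(s\,\cdot)^{p_1}\,d\sigma\Big)^{-1/p_1},
\]
with $f=0$ wherever the normalizing integral vanishes or is infinite. This choice gives, for a.e. $s$,
\[
s^{n-1}\int_{S^{n-1}}(fv_1)^{p_1}\,d\sigma=g(s)^{p_1},
\]
so the right-hand sides agree exactly. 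A direct computation gives $h^{p_2}(v_1^{-1}v_2)^{p_2}=h^{p_1}$, and using this the left-hand side's inner integral equals $g(s)^{p_2}\varphi(s)$, which is the equality case of Hölder. Hence the left-hand side of \eqref{Copson-Cesaro-emb} is at most $C$ times its right-hand side.

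The technical points concern the set where $\varphi(s)=\infty$. There we truncate: replace $h$ by $\min(h,N)$ restricted to a bounded set, apply the argument to the truncated version, and pass to the limit by monotone convergence. This yields $\mathcal{C}\le C$, and, when $\varphi=\infty$ on a set of positive measure, forces both constants to be infinite simultaneously.

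For $p_2=p_1$, concentrate $f$ near points of $S^{n-1}$ where $v_1^{-1}v_2(ss')$ is close to its essential supremum. Concretely, for $\varepsilon>0$ take $f$ supported on
\[
E_s=\{s' : (v_1^{-1}v_2)^{p_1}(ss')>(1-\varepsilon)\varphi(s)\},
\]
normalized as above; measurable selection in $s$ holds since $E=\bigcup_s \{s\}\times E_s$ is measurable. Letting $\varepsilon\to0$ gives $\mathcal{C}\le C$, and together with the first direction, $C=\mathcal{C}$.
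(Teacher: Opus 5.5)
Your proposal is correct and is essentially the paper's proof. For (ii)$\Rightarrow$(i) you use H\"older's inequality on $S^{n-1}$ with exponents $\frac{p_1}{p_2},\frac{p_1}{p_1-p_2}$ (essential supremum when $p_1=p_2$) and test (ii) with the same radial profile $g$. For (i)$\Rightarrow$(ii) your $f$ coincides with the paper's test function once you simplify $v_1^{-1}h=v_1^{-\frac{p_1}{p_1-p_2}}v_2^{\frac{p_2}{p_1-p_2}}$, and your sets $E_s$ are the paper's $E(r)$ with $\alpha=1-\varepsilon$.

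Your truncation $h\mapsto\min(h,N)$ with monotone convergence at points where $\varphi(s)=\infty$ is a small but genuine gain in rigor, since there the paper's normalization makes its test function vanish and its set $E(r)$ is null. That truncation already gives $\mathcal{C}\le C$ in every case, so drop your side claim that both constants must then be infinite: it can fail when $\int_0^t w_1^{q_1}$ is finite only for small $t$, because then every $g$ charging $(t_0,\infty)$, where $t_0$ is the point beyond which $\int_0^t w_1^{q_1}=\infty$, has infinite right-hand side in (ii).
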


\begin{proof}
By the polar coordinate formula in $\mathbb{R}^n$, the inner integrals in
the definitions of the weighted Morrey-type spaces can be rewritten as
follows:
\begin{align}\label{LH-Morrey} 
\bigg(\int_0^{\infty} &\bigg(\int_{B(0,t)} f(x)^{p_2} v_2(x)^{p_2} dx\bigg)^{\frac{q_2}{p_2}} w_2(t)^{q_2} dt \bigg)^{\frac{1}{q_2}} \notag\\
& =  \bigg(\int_0^{\infty} \bigg(\int_0^t \bigg[\int_{S^{n-1}} f(rx')^{p_2} v_2(rx')^{p_2} d\sigma(x') \bigg] r^{n-1} dr \bigg)^{\frac{q_2}{p_2}} w_2(t)^{q_2} dt \bigg)^{\frac{1}{q_2}}
\end{align}
and
\begin{align}\label{RH-cMorrey} 
\bigg( \int_0^{\infty}& \bigg(\int_{\dual B(0,t)} f(x)^{p_1} v_1(x)^{p_1} dx \bigg)^{\frac{q_1}{p_1}} w_1(t)^{q_1} dt \bigg)^{\frac{1}{q_1}}\notag  \\
& =  \bigg(\int_0^{\infty} \bigg(\int_t^{\infty} \bigg[\int_{S^{n-1}} f(rx')^{p_1} v_1(rx')^{p_1} d\sigma(x') \bigg] r^{n-1} dr \bigg)^{\frac{q_1}{p_1}} w_1(t)^{q_1} dt \bigg)^{\frac{1}{q_1}}. 
\end{align}

$(i)\Rightarrow (ii)$:  Assume that \eqref{cLM-LM-emb} holds for all $f \in \mathcal{M}^+(\mathbb{R}^n)$.  Our goal is to show that \eqref{Copson-Cesaro-emb} holds for all $g\in \mathcal{M}^+(0,\infty)$. We will treat the cases $p_2 < p_1$ and $p_2 = p_1$ separately.

Let us begin with the case $0 < p_2 < p_1$.  For an arbitrary function $g \in \mathcal{M}^+(0,\infty)$, consider the test function $f$ defined by
$$
f(x) = g(|x|) \left( \int_{S^{n-1}}\left[ v_1(|x|\tau')^{-1} v_2(|x|\tau')\right]^{\frac{p_1p_2}{p_1-p_2}} \, d\sigma(\tau') \right)^{-\frac{1}{p_1}}v_1(x)^{-\frac{p_1}{p_1-p_2}}v_2(x)^{\frac{p_2}{p_1-p_2}} |x|^{\frac{1-n}{p_1}}.
$$ 
Using  \eqref{LH-Morrey} and \eqref{RH-cMorrey}
as well as \eqref{def-varphi}, we deduce
\begin{align}\label{LH-Morrey-LH-Cesaro} \bigg(\int_0^{\infty} \bigg(\int_{B(0,t)} f(x)^{p_2} v_2(x)^{p_2} dx\bigg)^{\frac{q_2}{p_2}} w_2(t)^{q_2} dt \bigg)^{\frac{1}{q_2}} &= \bigg(\int_0^{\infty} \bigg(\int_0^t g(r)^{p_2}  \varphi(r) dr\bigg)^{\frac{q_2}{p_2}} w_2(t)^{q_2} dt \bigg)^{\frac{1}{q_2}}
\end{align}
and
\begin{align}\label{RH-cMorrey-RH-Copson}
\bigg( \int_0^{\infty} &\bigg(\int_{\dual B(0,t)} f(x)^{p_1} v_1(x)^{p_1} dx \bigg)^{\frac{q_1}{p_1}} w_1(t)^{q_1} dt \bigg)^{\frac{1}{q_1}}  = \bigg(\int_0^{\infty} \bigg(\int_t^{\infty} g(r)^{p_1} dr \bigg)^{\frac{q_1}{p_1}} w_1(t)^{q_1} dt  \bigg)^{\frac{1}{q_1}}.  
\end{align}
Thus, by combining \eqref{LH-Morrey-LH-Cesaro} and \eqref{RH-cMorrey-RH-Copson}, and using assumption \eqref{cLM-LM-emb}, 
when $p_2<p_1$,
we conclude that \eqref{Copson-Cesaro-emb} is valid with the best constant $\mathcal{C}$ satisfying $\mathcal{C} \leq C$.

Now, assume that $p_1=p_2=p$. Let 
$\alpha \in (0,1)$ be fixed
and define
\[ E(r) = \left \{ x' \in S^{n-1}:  \frac{v_2(rx')}{v_1(rx')}\ge 
\alpha\esup_{\tau \in S^{n-1}} \frac{v_2(r\tau)}{v_1(r \tau)}\right \}= \left \{ x' \in S^{n-1}:  \frac{v_2(rx')}{v_1(rx')}\ge \alpha
\varphi(r)^{\frac1p}
\right \}\] 
for $r>0$. Note that $|E(r)|>0$ by definition.
For an arbitrary function $g \in \mathcal{M}^+(0,\infty)$, define
the test function $f$ by
$$
f(x) = g(|x|)|E(|x|)|^{-\frac{1}{p}}\chi_{E(|x|)}\left(\frac{x}{|x|}\right)
v_1(x)^{-1}|x|^{\frac{1-n}{p}}
$$
for $x \in {\mathbb R}^n \setminus\{0\}$.
Using \eqref{LH-Morrey}, we have for $t \in (0,\infty)$ 
\begin{align}
\bigg(\int_0^{\infty} & \bigg(\int_{B(0,t)} f(x)^{p} v_2(x)^{p} dx\bigg)^{\frac{q_2}{p}} w_2(t)^{q_2} dt \bigg)^{\frac{1}{q_2}}\notag\\
&= \bigg(\int_0^{\infty} \bigg(\int_0^t g(r)^p |E(r)|^{-1} \bigg[ \int_{E(r)} \left(\frac{v_2(rx')}{v_1(rx')}\right)^p  d\sigma(x') \bigg] dr\bigg)^{\frac{q_2}{p}} w_2(t)^{q_2} dt \bigg)^{\frac{1}{q_2}} \notag\\
&\ge \bigg(\int_0^{\infty} \bigg(\int_0^t g(r)^p |E(r)|^{-1} \bigg[ \int_{E(r)} 
\alpha^p\left( \esup_{\tau \in S^{n-1}} \frac{v_2(r\tau)}{v_1(r\tau)} \right)^p  d\sigma(x') \bigg] dr\bigg)^{\frac{q_2}{p}} w_2(t)^{q_2} dt \bigg)^{\frac{1}{q_2}} \notag\\
&=\alpha \bigg(\int_0^{\infty} \bigg(  \int_0^t g(r)^p \varphi(r)  dr\bigg)^{\frac{q_2}{p}} w_2(t)^{q_2} dt \bigg)^{\frac{1}{q_2}}. \label{Mor-p=1-1}
\end{align}
Meanwhile in view of \eqref{RH-cMorrey}, we have for $t\in(0,\infty)$
\begin{align}
&\bigg(\int_0^{\infty} \bigg(\int_{\dual B(0,t)} f(x)^pv_1(x)^p\, dx \bigg)^{\frac{q_1}{p}} w_1(t)^{q_1} dt \bigg)^{\frac{1}{q_1}}\notag\\
& \hskip+0.2cm= \bigg(\int_0^{\infty} \bigg(\int_t^\infty g(r)^p |E(r)|^{-1} r^{1-n} \bigg[ \int_{E(r)} v_1(rx')^{-p} v_1(r x')^p d\sigma(x') \bigg]  r^{n-1} dr \bigg)^{\frac{q_1}{p}} w_1(t)^{q_1} dt \bigg)^{\frac{1}{q_1}}\notag\\
&\hskip+0.2cm= \bigg(\int_0^{\infty} \bigg(\int_t^\infty g(r)^p \,dr \bigg)^{\frac{q_1}{p}} w_1(t)^{q_1} dt \bigg)^{\frac{1}{q_1}}. \label{cMor-p=1-1}
\end{align} 
Therefore,  by
taking into account the arbitrariness of $\alpha$
and
substituting \eqref{Mor-p=1-1} and \eqref{cMor-p=1-1} into our initial assumption \eqref{cLM-LM-emb}, we conclude that \eqref{Copson-Cesaro-emb} holds for every $g \in \mathcal{M}^+(0,\infty)$ with the best constant $\mathcal{C}$ satisfying $\mathcal{C} \leq C$.

$(ii)\Rightarrow (i)$: Assume now that \eqref{Copson-Cesaro-emb} holds for all $g \in \mathcal{M}^+(0, \infty)$. Our goal is to show that \eqref{cLM-LM-emb} holds for all $f \in \mathcal{M}^+(\mathbb{R}^n)$. As in the previous implication, we will treat the cases $0 < p_2 < p_1$ and $p_2 = p_1$ separately.

Let us first consider the case $0 < p_2 < p_1$. For an arbitrary $f \in \mathcal{M}^+(\mathbb{R}^n)$, utilizing \eqref{LH-Morrey} and applying H\"{o}lder's inequality with exponents $(\frac{p_1}{p_2}, \frac{p_1}{p_1-p_2})$, we obtain
\begin{align*}
\bigg(\int_0^{\infty} &\bigg(\int_{B(0,t)} f(x)^{p_2} v_2(x)^{p_2} dx\bigg)^{\frac{q_2}{p_2}} w_2(t)^{q_2} dt \bigg)^{\frac{1}{q_2}} \notag\\
&\leq \bigg(\int_0^{\infty} \bigg(\int_0^t \bigg[\int_{S^{n-1}}
f(rx')^{p_1} v_1(rx')^{p_1} d\sigma(x')\bigg]^{\frac{p_2}{p_1}} \notag\\
&\hskip+1cm \times \bigg[\int_{S^{n-1}} \left[v_1(rx')^{-1}v_2(rx')\right]^{\frac{p_1p_2}{p_1-p_2}} d\sigma(x')\bigg]^{\frac{p_1-p_2}{p_1}}
r^{n-1} dr \bigg)^{\frac{q_2}{p_2}} w_2(t)^{q_2} dt \bigg)^{\frac{1}{q_2}} \notag \\
&=  \bigg(\int_0^{\infty} \bigg(\int_0^t \varphi(r) \bigg[\int_{S^{n-1}}
f(rx')^{p_1} v_1(rx')^{p_1} d\sigma(x')\bigg]^{\frac{p_2}{p_1}} 
r^{\frac{p_2(n-1)}{p_1}} dr \bigg)^{\frac{q_2}{p_2}} w_2(t)^{q_2} dt \bigg)^{\frac{1}{q_2}}. 
\end{align*}
Now, applying \eqref{Copson-Cesaro-emb} for
\[
g(r)= \bigg[\int_{S^{n-1}}
f(rx')^{p_1} v_1(rx')^{p_1} d\sigma(x')\bigg]^{\frac{1}{p_1}} 
r^{\frac{n-1}{p_1}}
\]
and taking equality
\eqref{RH-cMorrey} into account, we arrive at
\begin{align*}
\bigg(\int_0^{\infty} &\bigg(\int_{B(0,t)} f(x)^{p_2} v_2(x)^{p_2} dx\bigg)^{\frac{q_2}{p_2}} w_2(t)^{q_2} dt \bigg)^{\frac{1}{q_2}} \notag\\
&  \leq \mathcal{C} \bigg(\int_0^{\infty} \bigg(\int_t^\infty \bigg[\int_{S^{n-1}}
f(rx')^{p_1}v_1(rx')^{p_1} d\sigma(x')\bigg]  r^{n-1} dr \bigg)^{\frac{q_1}{p_1}} w_1(t)^{q_1} dt \bigg)^{\frac{1}{q_1}}\\
&= \mathcal{C} \bigg( \int_0^{\infty} \bigg(\int_{\dual B(0,t)} f(x)^{p_1}v_1(x)^{p_1}  dx
\bigg)^{\frac{q_1}{p_1}} w_1(t)^{q_1} dt \bigg)^{\frac{1}{q_1}}.
\end{align*}
Hence, \eqref{cLM-LM-emb} holds. Moreover, the best constant $C$ in this inequality satisfies $C \leq \mathcal{C}$.

The proof for the case $p_1 = p_2 = p$ is completely analogous, where the application of H\"{o}lder's inequality is replaced by taking the essential supremum (corresponding to conjugate exponents $1$ and $\infty$). In view of \eqref{LH-Morrey}, we have
\begin{align*}
\bigg(\int_0^{\infty} & \bigg(\int_{B(0,t)} f(x)^{p} v_2(x)^{p} dx\bigg)^{\frac{q_2}{p}} w_2(t)^{q_2} dt \bigg)^{\frac{1}{q_2}}\notag\\
&\leq \bigg(\int_0^{\infty} \bigg(\int_0^t \bigg[\int_{S^{n-1}}
f(rx')^{p} v_1(rx')^{p} d\sigma(x')\bigg] \notag\\
&\hskip+3cm \times \esup_{x'\in S^{n-1}} \left[v_1(rx')^{-1} v_2(rx')\right]^p  r^{n-1} dr \bigg)^{\frac{q_2}{p}} w_2(t)^{q_2} dt \bigg)^{\frac{1}{q_2}} \notag \\
& =  \bigg(\int_0^{\infty} \bigg(\int_0^t  \varphi(r) \bigg[\int_{S^{n-1}} f(rx')^{p} v_1(r x')^{p}  d\sigma(x')
\bigg] r^{n-1} dr \bigg)^{\frac{q_2}{p}} w_2(t)^{q_2} dt \bigg)^{\frac{1}{q_2}}.
\end{align*}
Applying inequality \eqref{Copson-Cesaro-emb} for
\[
g(r) = \bigg[\int_{S^{n-1}}
f(rx')^{p} v_1(rx')^{p} d\sigma(x')\bigg]^{\frac{1}{p}} 
r^{\frac{n-1}{p}}
\]
and using \eqref{RH-cMorrey}, we obtain that
\begin{align*}
\bigg(\int_0^{\infty} & \bigg(\int_{B(0,t)} f(x)^{p} v_2(x)^{p} dx\bigg)^{\frac{q_2}{p}} w_2(t)^{q_2} dt \bigg)^{\frac{1}{q_2}}\notag\\
& \leq \mathcal{C} \bigg(\int_0^{\infty} \bigg(\int_t^\infty \bigg[\int_{S^{n-1}}
f(rx')^{p} v_1(rx')^{p} d\sigma(x')\bigg]  r^{n-1} dr \bigg)^{\frac{q_1}{p}} w_1(t)^{q_1} dt \bigg)^{\frac{1}{q_1}}\\
&= \mathcal{C} \bigg( \int_0^{\infty} \bigg(\int_{\dual B(0,t)} f(x)^{p}v_1(x)^{p}  dx
\bigg)^{\frac{q_1}{p}} w_1(t)^{q_1} dt \bigg)^{\frac{1}{q_1}}.
\end{align*}
Hence, \eqref{cLM-LM-emb} holds. Moreover, the best constant $C$ in this inequality satisfies $C \leq \mathcal{C}$.
This completes the proof.
\end{proof}

\section{Embeddings} \label{section:Embeddings}

The aim of this section is to characterize the embeddings \eqref{LM-LM} and \eqref{cLM-LM}.

\subsection{\texorpdfstring{$ \LM_{p_1,q_1}(v_1, w_1) \hookrightarrow \LM_{p_2,q_2}(v_2, w_2)$}{LM(p1,q1) to LM(p2,q2)}}

Denote by ${\rm I}$
the embedding operator
from
$ \LM_{p_1,q_1}(v_1, w_1) $
into
$\LM_{p_2,q_2}(v_2, w_2)$
if it exists and define
\[
\|\Id\|_{\LM_{p_1,q_1}(v_1, w_1) \rightarrow \LM_{p_2,q_2}(v_2, w_2)}:=\sup_{f\in  \LM_{p_1,q_1}(v_1, w_1)\setminus \{0\} }\frac{ \|f\|_{ \LM_{p_2,q_2}(v_2, w_2)}}{\|f\|_{\LM_{p_1,q_1}(v_1, w_1)}} .\]

\begin{theorem}\label{T:LM-LM}
Let $0 < p_1, p_2, q_1, q_2 < \infty$. Assume that $w_1$ and $w_2$ are weights on $(0, \infty)$ such that $\int_t^{\infty} w_i^{q_i} < \infty$, $i=1,2$ for all $t \in (0, \infty)$ and $v_1$ and $v_2$ are  weights on $\mathbb{R}^n$.

{\rm(i)} If  $q_1\le p_2< p_1 \le q_2$, then \eqref{LM-LM} holds for all $f \in \mathcal{M}^+(\mathbb{R}^n)$ if and only if 
\begin{equation}\label{C1}
C_1 :=  \esup_{t\in (0,\infty)}
\left[\bigg(\int_t^{\infty} w_1^{q_1} \bigg)^{-\frac{1}{q_1}} \esup_{s\in (t,\infty)} \bigg( \int_s^{\infty} w_2^{q_2} \bigg)^{\frac{1}{q_2}}  \bigg(\int_{B(0,s)\setminus B(0,t)} \left[ v_1^{-1} v_2 \right]^{\frac{p_1p_2}{p_1-p_2}} \bigg)^{\frac{p_1-p_2}{p_1p_2}}
\right]
\end{equation}
is finite.
Moreover, 
\[
\|\Id\|_{\LM_{p_1,q_1}(v_1, w_1) \rightarrow \LM_{p_2,q_2}(v_2, w_2)} \approx C_1.
\]

{\rm(ii)} If  $q_1\le q_2 < p_1$, and $q_1 \le p_2 < p_1$ then \eqref{LM-LM} holds for all $f \in \mathcal{M}^+(\mathbb{R}^n)$ if and only if 
\begin{align}
C_2 :=  \sup_{t\in (0,\infty)} &\bigg(\int_t^{\infty} w_1^{q_1} \bigg)^{-\frac{1}{q_1}} \notag \\
&\times  \bigg[\int_t^{\infty} \bigg( \int_s^{\infty}w_2^{q_2} \bigg)^{\frac{q_2}{p_1-q_2}} w_2(s)^{q_2}\bigg(\int_{B(0,s)\setminus B(0,t)} \left[ v_1^{-1} v_2 \right]^{\frac{p_1p_2}{p_1-p_2}} \bigg)^{\frac{(p_1-p_2)q_2}{(p_1-q_2)p_2}}\,ds \bigg]^{\frac{p_1-q_2}{p_1q_2}} \label{C2}
\end{align}
is finite.
Moreover, 
\[
\|\Id\|_{\LM_{p_1,q_1}(v_1, w_1) \rightarrow \LM_{p_2,q_2}(v_2, w_2)} \approx C_2.
\]

{\rm(iii)} If  $p_2 < q_1 \le q_2$, and $p_2 < p_1 \le q_2$ then \eqref{LM-LM} holds for all $f \in \mathcal{M}^+(\mathbb{R}^n)$ if and only if $C_1 < \infty$, where $C_1$ is defined in \eqref{C1} and
\begin{align}
C_3 :=  \sup_{t \in (0,\infty)} &\bigg(\int_t^{\infty} w_2^{q_2} \bigg)^{\frac{1}{q_2}}  \bigg[\int_0^{t} \bigg(\int_s^{\infty} w_1^{q_1}\bigg)^{-\frac{q_1}{q_1-p_2}} w_1(s)^{q_1}\notag \\
&\times\bigg(\int_{B(0,t)\setminus B(0,s)} \left[ v_1^{-1} v_2 \right]^{\frac{p_1p_2}{p_1-p_2}} \bigg)^{\frac{(p_1-p_2)q_1}{(q_1-p_2)p_1}}\,ds  \bigg]^{\frac{q_1-p_2}{q_1p_2}} \label{C3}
\end{align}
is finite.
Moreover, 
\[
\|\Id\|_{\LM_{p_1,q_1}(v_1, w_1) \rightarrow \LM_{p_2,q_2}(v_2, w_2)} \approx C_1+C_3.
\]

{\rm(iv)} If $p_2 < q_1 \le q_2 < p_1$
then \eqref{LM-LM} holds for all $f \in \mathcal{M}^+(\mathbb{R}^n)$ if and only if
$C_2 < \infty$ and $C_3< \infty$, where $C_2$ and $C_3$ are defined in \eqref{C2} and \eqref{C3}, respectively. Moreover, 
\[
\|\Id\|_{\LM_{p_1,q_1}(v_1, w_1) \rightarrow \LM_{p_2,q_2}(v_2, w_2)} \approx C_2 + C_3.
\]

{\rm(v)} If $q_2 < q_1 \leq p_2 < p_1$ then \eqref{LM-LM} holds for all $f \in \mathcal{M}^+(\mathbb{R}^n)$ if and only if 
\begin{align*}
C_4 := \bigg(\int_0^{\infty}  &\bigg(\int_t^{\infty} w_2^{q_2} \bigg)^{\frac{q_2}{q_1-q_2}} w_2(t)^{q_2}  \esup_{s\in (0, t)}  \bigg(\int_s^{\infty} w_1^{q_1}\bigg)^{-\frac{q_2}{q_1-q_2}} \notag \\
&\times\bigg(\int_{B(0,t)\setminus B(0,s)} \left[ v_1^{-1} v_2 \right]^{\frac{p_1p_2}{p_1-p_2}} \bigg)^{\frac{(p_1-p_2)q_1q_2}{(q_1-q_2)p_1p_2}}\,dt  \bigg)^{\frac{q_1-q_2}{q_1q_2}} 
\end{align*}
and
\begin{align}\label{C5}
C_5 &:=  \bigg\{\int_0^{\infty} w_1(t)^{q_1} \esup_{y\in (0,t)} \bigg(\int_y^{\infty}w_1^{q_1}\bigg)^{-\frac{q_1}{q_1-q_2}}\bigg[ \int_{y}^t \bigg( \int_s^t  w_2^{q_2}\bigg)^{\frac{q_2}{p_1-q_2}} w_2(s)^{q_2} \notag \\
&  \hspace{2cm} \times \bigg(\int_{B(0,s)\setminus B(0,y)} \left[ v_1^{-1} v_2 \right]^{\frac{p_1p_2}{p_1-p_2}}
 \bigg)^{\frac{(p_1-p_2)q_2}{(p_1-q_2)p_2}} ds \bigg]^{\frac{(p_1-q_2)q_1}{(q_1-q_2)p_1}} dt \bigg\}^{\frac{q_1-q_2}{q_1q_2}}
\end{align}
are finite.
Moreover, 
\[
\|\Id\|_{\LM_{p_1,q_1}(v_1, w_1) \rightarrow \LM_{p_2,q_2}(v_2, w_2)} \approx C_4+C_5.
\]

{\rm(vi)} If $q_2 < \min\{q_1, p_1\}$, and $p_2 < \min\{q_1, p_1\}$ then \eqref{LM-LM} holds for all $f \in \mathcal{M}^+(\mathbb{R}^n)$ if and only if $C_1 < \infty$, $C_5< \infty$, where  $C_1$ and $C_5$  are defined in  \eqref{C1} and \eqref{C5} respectively, and
\begin{align}\label{C6}
C_6 &:= \Bigg\{\int_0^{\infty} w_1(t)^{q_1} \esup_{ y\in(0, t)} \bigg(\int_y^{\infty} w_1^{q_1}\bigg)^{-1} \Bigg[\int_y^{t} 
\bigg(\int_s^{\infty}w_2^{q_2}\bigg)^{\frac{q_2}{q_1-q_2}} w_2(s)^{q_2}ds \bigg] \notag\\
&  \hspace{1cm} \times \bigg[\int_0^y  \bigg(\int_s^{\infty} w_1^{q_1}\bigg)^{-\frac{q_1}{q_1-p_2}}w_1(s)^{q_1} 
\notag\\
&\hspace{2cm} \times
\bigg(\int_{B(0,y)\setminus B(0,s)} \left[ v_1^{-1} v_2 \right]^{\frac{p_1p_2}{p_1-p_2}}
 \bigg)^{\frac{(p_1-p_2)q_1}{(q_1-p_2)p_1}} ds\bigg]^{\frac{(q_1-p_2)q_2}{(q_1-q_2)p_2}} dt \Bigg\}^{\frac{q_1-q_2}{q_1q_2}}
\end{align}
is finite.
Moreover, 
\[
\|\Id\|_{\LM_{p_1,q_1}(v_1, w_1) \rightarrow \LM_{p_2,q_2}(v_2, w_2)} \approx C_1+C_5+C_6.
\]

{\rm(vii)} If $p_2 <p_1 \leq q_2 < q_1$ then \eqref{LM-LM} holds for all $f \in \mathcal{M}^+(\mathbb{R}^n)$ if and only if $C_1 < \infty$, $C_6< \infty$, where  $C_1$ and $C_6$ are  defined in \eqref{C1} and \eqref{C6} respectively, and 
\begin{align*}
C_7 := \Bigg(\int_0^{\infty}  w_1(t)^{q_1} &\esup_{ y\in(0, t)} \bigg(\int_y^{\infty} w_1^{q_1}\bigg)^{-\frac{q_1}{q_1-q_2}} \esup_{s \in (y,t) }  \bigg( \int_s^{t} w_2^{q_2} \bigg)^{\frac{q_1}{q_1-q_2}}\\
&\times \bigg(\int_{B(0,s)\setminus B(0,y)} \left[ v_1^{-1} v_2 \right]^{\frac{p_1p_2}{p_1-p_2}}
 \bigg)^{\frac{(p_1-p_2)q_1q_2}{(q_1-p_2)p_1p_2}} dt \bigg)^{\frac{q_1-q_2}{q_1q_2}}
\end{align*}
is finite.
Moreover, 
\[
\|\Id\|_{\LM_{p_1,q_1}(v_1, w_1) \rightarrow \LM_{p_2,q_2}(v_2, w_2)} \approx C_1+C_6+C_7.
\]

{\rm(viii)} If $q_1\le p_2 = p_1 \le q_2$ then \eqref{LM-LM} holds for all $f \in \mathcal{M}^+(\mathbb{R}^n)$ if and only if 
\begin{equation}\label{C8}
C_8 := \esup_{t\in (0,\infty)} \bigg(\int_t^{\infty} w_1^{q_1} \bigg)^{-\frac{1}{q_1}} \esup_{s\in (t,\infty)} \bigg( \int_s^{\infty} w_2^{q_2} \bigg)^{\frac{1}{q_2}} 
\esup_{x\in B(0,s)\setminus B(0,t)}  v_1(x)^{-1} v_2(x)  
\end{equation}
is finite.
Moreover, 
\[
\|\Id\|_{\LM_{p_1,q_1}(v_1, w_1) \rightarrow \LM_{p_2,q_2}(v_2, w_2)} \approx C_8.
\]

{\rm(ix)} If $q_1\le q_2 < p_1=p_2$, and $q_1 \le p_2 = p_1$, then \eqref{LM-LM} holds for all $f \in \mathcal{M}^+(\mathbb{R}^n)$ if and only if 
\begin{align}
C_9 &:=  \esup_{t\in (0,\infty)} \bigg(\int_t^{\infty} w_1^{q_1} \bigg)^{-\frac{1}{q_1}}   \notag \\
&\quad \quad \times
\bigg[\int_t^{\infty} \bigg( \int_s^{\infty}w_2^{q_2} \bigg)^{\frac{q_2}{p_1-q_2}} w_2(s)^{q_2}\esup_{x\in B(0,s)\setminus B(0,t)} \left[ v_1(x)^{-1} v_2(x) \right]^{\frac{p_1q_2}{p_1-q_2}}\,ds \bigg]^{\frac{p_1-q_2}{p_1q_2}}\label{C9}
\end{align}
is finite.
Moreover, 
\[
\|\Id\|_{\LM_{p_1,q_1}(v_1, w_1) \rightarrow \LM_{p_2,q_2}(v_2, w_2)} \approx C_9.
\]

{\rm(x)} If $p_1=p_2 < q_1 \le q_2$ then \eqref{LM-LM} holds for all $f \in \mathcal{M}^+(\mathbb{R}^n)$ if and only if $C_8 < \infty$, where $C_8$ is defined in \eqref{C8}, and
\begin{align}
C_{10} := \esup_{t \in (0,\infty)} \bigg(\int_t^{\infty} w_2^{q_2} \bigg)^{\frac{1}{q_2}}  &\bigg(\int_0^{t} \bigg(\int_s^{\infty} w_1^{q_1}\bigg)^{-\frac{q_1}{q_1-p_2}} w_1(s)^{q_1}\notag \\
&\times
\esup_{x\in B(0,t)\setminus B(0,s)} \left[ v_1(x)^{-1} v_2(x) \right]^{\frac{q_1p_2}{q_1-p_2}} ds \bigg)^{\frac{q_1-p_2}{q_1p_2}} \label{C10}
\end{align}
is finite.
Moreover, 
\[
\|\Id\|_{\LM_{p_1,q_1}(v_1, w_1) \rightarrow \LM_{p_2,q_2}(v_2, w_2)} \approx C_8+C_{10}.
\]

{\rm(xi)} If $q_2 < q_1 \leq p_2 = p_1$ then \eqref{LM-LM} holds for all $f \in \mathcal{M}^+(\mathbb{R}^n)$ if and only if 
\begin{align*}
C_{11} :=  \bigg(\int_0^{\infty}  &\bigg(\int_t^{\infty} w_2^{q_2} \bigg)^{\frac{q_2}{q_1-q_2}} w_2(t)^{q_2}  \esup_{s\in (0, t)}  \bigg(\int_s^{\infty} w_1^{q_1}\bigg)^{-\frac{q_2}{q_1-q_2}} \notag \\
&\times
\esup_{x\in B(0,t)\setminus B(0,s)} \left[ v_1(x)^{-1} v_2(x) \right]^{\frac{q_1q_2}{q_1-q_2}} dt \bigg)^{\frac{q_1-q_2}{q_1q_2}} 
\end{align*}
and
\begin{align}
\label{C12}
C_{12}
&:=
\Bigg\{
\int_0^\infty
w_1(t)^{q_1}
\mathop{\operatorname{ess\,sup}}_{y\in(0,t)}
\left(
\int_y^\infty w_1^{q_1}
\right)^{-\frac{q_1}{q_1-q_2}}
\\
\nonumber&\qquad\times
\Bigg(
\int_y^t
\left(
\int_s^t w_2^{q_2}
\right)^{\frac{q_2}{p_1-q_2}}
w_2^{q_2}
\mathop{\operatorname{ess\,sup}}_{x\in B(0,s)\setminus B(0,y)}
\left[
v_1(x)^{-1}v_2(x)
\right]^{\frac{p_1q_2}{p_1-q_2}}
\,ds
\Bigg)^{\frac{(p_1-q_2)q_1}{(q_1-q_2)p_1}}
\,dt
\Bigg\}^{\frac{q_1-q_2}{q_1q_2}} 
\end{align}
are finite.
Moreover, 
\[
\|\Id\|_{\LM_{p_1,q_1}(v_1, w_1) \rightarrow \LM_{p_2,q_2}(v_2, w_2)} \approx C_{11}+C_{12}.
\]

{\rm(xii)} If $q_2 <  p_1 = p_2<  q_1$, then \eqref{LM-LM} holds for all $f \in \mathcal{M}^+(\mathbb{R}^n)$ if and only if $C_8 < \infty$, $C_{12}< \infty$ where  $C_8$ and $C_{12}$  are defined in  \eqref{C8} and \eqref{C12} respectively, and
\begin{align}\label{C13}
C_{13} &:= \bigg(\int_0^{\infty} w_1(t)^{q_1} \esup_{ y\in(0, t)} \bigg(\int_y^{\infty} w_1^{q_1}\bigg)^{-1} \Bigg(\int_y^{t} 
\bigg(\int_{\tau}^{\infty} w_2^{q_2}\bigg)^{\frac{q_2}{q_1-q_2}} w_2(\tau)^{q_2} d\tau \bigg) \\
& \quad \times \bigg(\int_0^y  \bigg(\int_s^{\infty} w_1^{q_1}\bigg)^{-\frac{q_1}{q_1-p_1}}w_1(s)^{q_1} 
\esup_{x\in B(0,y)\setminus B(0,s)} \left[ v_1(x)^{-1} v_2(x) \right]^{\frac{q_1p_1}{q_1-p_1}} ds \bigg)^{\frac{q_2(q_1-p_1)}{p_1(q_1-q_2)}} dt \bigg)^{\frac{q_1-q_2}{q_1q_2}}\notag
\end{align}
is finite.
Moreover, 
\[
\|\Id\|_{\LM_{p_1,q_1}(v_1, w_1) \rightarrow \LM_{p_2,q_2}(v_2, w_2)} \approx C_8+ C_{12}+ C_{13}.
\]

{\rm(xiii)} If $p_2 = p_1 \leq q_2 < q_1$ then \eqref{LM-LM} holds for all $f \in \mathcal{M}^+(\mathbb{R}^n)$ if and only if $C_{8}< \infty$, $C_{13} < \infty$, where  $C_8$ and $C_{13}$ are  defined in  \eqref{C8} and \eqref{C13} respectively, and 
\begin{align*}
C_{14} := \bigg(\int_0^{\infty}  w_1(t)^{q_1} &\esup_{ y\in(0, t)} \bigg(\int_y^{\infty} w_1^{q_1}\bigg)^{-\frac{q_1}{q_1-q_2}} \esup_{s \in (y,t) }  \bigg( \int_s^{t} w_2^{q_2} \bigg)^{\frac{q_1}{q_1-q_2}}\\
&\times 
\esup_{x\in B(0,s)\setminus B(0,y)} \left[ v_1(x)^{-1} v_2(x) \right]^{\frac{q_1q_2}{q_1-q_2}}dt \bigg)^{\frac{q_1-q_2}{q_1q_2}}
\end{align*}
is finite.
Moreover, 
\[
\|\Id\|_{\LM_{p_1,q_1}(v_1, w_1) \rightarrow \LM_{p_2,q_2}(v_2, w_2)} \approx C_8+C_{13}+C_{14}.
\]
\end{theorem}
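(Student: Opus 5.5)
Theorem~\ref{T:LM-LM} reduces to one-dimensional Ces\`aro estimates via Theorem~\ref{T:Morrey-Cesaro}. The assumption is stated with tails $\int_t^\infty w_i^{q_i}<\infty$ for all $t$; I will read the hypotheses of Theorem~\ref{T:Morrey-Cesaro} as requiring exactly this non-triviality condition for $\LM$. Theorem~\ref{T:Morrey-Cesaro} then gives
\[
\|\Id\|_{\LM_{p_1,q_1}(v_1,w_1)\to\LM_{p_2,q_2}(v_2,w_2)}\approx \mathcal{C},
\]
where $\mathcal{C}$ is the best constant in \eqref{Cesaro-emb}. That inequality is precisely the embedding
\[
\Ces_{p_1,q_1}(1,w_1)\hookrightarrow \Ces_{p_2,q_2}(\varphi^{1/p_2},w_2)
\]
between weighted Ces\`aro spaces on $(0,\infty)$. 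So the whole task is to read off the characterization of this Ces\`aro--Ces\`aro embedding from \cite{GPU-JFA} (and \cite{GU-AMP-Ces}, which covers the degenerate case $p_1=p_2$), case by case, and then translate the resulting one-dimensional constants back into $\mathbb{R}^n$.

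The cases (i)--(vii) correspond to $p_2<p_1$. The Ces\`aro embedding theorem is stated in terms of the exponent $r$ with $1/r=1/p_2-1/p_1$, i.e.\ $r=\frac{p_1p_2}{p_1-p_2}$. Its quantities involve integrals of the form $\int_t^s \varphi$, appearing as $\big(\int_t^s\varphi\big)^{1/r}$ raised to various powers. The case splitting in the one-dimensional result is according to the relative positions of $q_1,p_2,p_1,q_2$ (whether $q_1\le p_2$ or $p_2<q_1$, and whether $q_2$ lies above $p_1$, between, or below $q_1$). This produces exactly the seven regimes listed, with the number of conditions ($C_1$; $C_2$; $C_1+C_3$; and so on) dictated by that theorem.

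The key translation step is the identity
\[
\int_t^s \varphi(r)\,dr=\int_{B(0,s)\setminus B(0,t)}\left[v_1^{-1}v_2\right]^{\frac{p_1p_2}{p_1-p_2}}dx,
\]
which follows from \eqref{def-varphi} and polar coordinates. To see this, note that $\varphi(r)^{p_1/(p_1-p_2)}$ is not what appears; rather, the Ces\`aro results with inner weight $\varphi$ on $g^{p_2}$ and unweighted $g^{p_1}$ on the right feature $\varphi^{p_1/(p_1-p_2)}$ integrated. Concretely, the weight $\varphi^{1/p_2}$ enters with power $r$, giving
\[
\varphi^{\frac{p_1}{p_1-p_2}}(r)=\int_{S^{n-1}}\left[v_1^{-1}v_2\right]^{\frac{p_1p_2}{p_1-p_2}}(r s')\,d\sigma(s')\;r^{n-1},
\]
which integrates to the annulus integral above. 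With this identity in hand I substitute into each one-dimensional constant, also rewriting powers like $\big(\int\varphi^{p_1/(p_1-p_2)}\big)^{(p_1-p_2)/(p_1p_2)}$. I also need to check that essential suprema and the iterated integrals in $t,s,y$ map term by term.

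For (viii)--(xiii), $p_1=p_2$, so $\varphi=\esup_{s'}[v_1^{-1}v_2]^{p_1}$ and the one-dimensional embedding becomes
\[
\Ces_{p,q_1}(1,w_1)\hookrightarrow\Ces_{p,q_2}(\varphi^{1/p},w_2).
\]
The relevant characterization for this case involves $\esup_{r\in(t,s)}\varphi(r)$. The translation step is then
\[
\esup_{r\in(t,s)}\varphi(r)^{1/p}=\esup_{x\in B(0,s)\setminus B(0,t)}v_1(x)^{-1}v_2(x).
\]
This requires a measure-theoretic justification that an essential supremum over radii of spherical essential suprema equals the essential supremum over the annulus. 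I would prove it by Fubini on the set $\{v_2/v_1>\lambda\}$ for rational $\lambda$.

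The main obstacle is bookkeeping rather than ideas. I must match each of the thirteen parameter regimes with the precise statement in \cite{GPU-JFA,GU-AMP-Ces}, whose conditions are often phrased via the fundamental functions $\int_t^\infty w_1^{q_1}$ and $\int_t^\infty w_2^{q_2}$, or via $\int_s^t w_2^{q_2}$ in the cases $q_2<q_1$. I must also verify that the exponents transform as displayed, for instance that $\frac{(p_1-p_2)q_2}{(p_1-q_2)p_2}$ equals $\frac{q_2}{p_1-q_2}\cdot\frac{p_1-p_2}{p_2}$ times the $1/r$ normalization. I would do this carefully first for one representative case with a sum of constants, say (vi), and the remaining cases follow by the same substitution.
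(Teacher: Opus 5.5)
Your proposal is correct and follows the paper's proof. You reduce via Theorem~\ref{T:Morrey-Cesaro} to \eqref{Cesaro-emb}, apply the one-dimensional Ces\`aro--Ces\`aro characterization case by case, and convert $V_r(t,s)$ into annulus integrals (annulus essential suprema when $p_1=p_2$) by polar coordinates; two corrections: the paper takes all thirteen cases, including $r=p_2/p_1=1$, from \cite[Theorem~2.1]{GU-AMP-Ces} (\cite{GPU-JFA} is the Copson--Ces\`aro result used for Theorem~\ref{T:cLM-LM}), and your first displayed identity needs $\varphi^{p_1/(p_1-p_2)}$ as integrand, as your next display already shows; your Fubini argument over rational levels for the essential-supremum identity is a useful detail that the paper simply asserts in \eqref{eq:260727-6}.
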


\begin{proof}
Recall that the embedding \eqref{LM-LM} implies that the identity operator is continuous from $\LM_{p_1,q_1}(v_1, w_1)$ to $\LM_{p_2,q_2}(v_2, w_2)$, which means that the inequality \eqref{Morrey-emb} is satisfied. Also, by Theorem~\ref{T:Morrey-Cesaro}, inequality \eqref{Morrey-emb} holds for all $f\in \mathcal{M}^+(\mathbb{R}^n)$ if and only if \eqref{Cesaro-emb} holds for all $g\in \mathcal{M}^+(0,\infty)$. Note that replacing the entire expression $g^{p_1}$ directly with $h$ does not alter the set of admissible functions in \eqref{Cesaro-emb}, and consequently, inequality \eqref{Cesaro-emb} can be rewritten in the following form:
\begin{align}
\label{eq:260727-1}
\bigg(\int_0^{\infty} \bigg(\int_0^t h(s)^{\frac{p_2}{p_1}} \varphi(s) \, ds\bigg)^{\frac{q_2}{p_2}} w_2(t)^{q_2} dt 
\bigg)^{\frac{p_1}{q_2}} \le \mathcal{C}^{p_1} \bigg( \int_0^{\infty} \bigg(\int_0^t h(s) \, ds \bigg)^{\frac{q_1}{p_1}} 
w_1(t)^{q_1} dt \bigg)^{\frac{p_1}{q_1}}.
\end{align}

In order to apply \cite[Theorem~2.1]{GU-AMP-Ces} to this inequality, we set the parameters as
\begin{equation}\label{pqr-change}
p= \frac{q_1}{p_1}, \quad q= \frac{q_2}{p_1}, \quad r=\frac{p_2}{p_1},
\end{equation}
and define the weight functions
\begin{equation}\label{uvw-change}
v(s)= \varphi(s), \quad u(t)= w_2(t)^{q_2}, \quad w(t)= w_1(t)^{q_1}.
\end{equation}
Since $h$ is arbitrary, it follows that \eqref{eq:260727-1} can be rewritten as
\[
\left(
\int_0^{\infty}
\left(
\int_0^t f(s)^r \varphi(s)\,ds
\right)^{\frac{q}{r}}
u(t)\,dt
\right)^{\frac{p}{q}}
\le
\mathcal{C}^{p_1}
\left(
\int_0^{\infty}
\left(
\int_0^t f(s)\,ds
\right)^p
w(t)\,dt
\right)^{\frac{1}{p}}
\]
for every $f\in\mathcal{M}^+(0,\infty)$.
This is precisely inequality~(1.3) in \cite{GU-AMP-Ces}.

For future reference in characterizing this embedding, we observe the following algebraic relations among these parameters as long as the denominator is not zero:
\begin{align}
\frac{1}{1-r}& = \frac{p_1}{p_1-p_2}, & \frac{r}{1-r} &= \frac{p_2}{p_1-p_2}, & \frac{q}{1-q} &= \frac{q_2}{p_1-q_2}, \nonumber\\
\frac{p}{p-r} &= \frac{q_1}{q_1-p_2}, & \frac{q}{p-q} &= \frac{q_2}{q_1-q_2}, & \frac{p}{p-q} &= \frac{q_1}{q_1-q_2}, \nonumber\\
\frac{pq}{p-q} &=  \frac{q_1q_2}{p_1(q_1-q_2)}, & \frac{pr}{p-r} &= \frac{q_1p_2}{p_1(q_1-p_2)}, & \frac{p(1-q)}{p-q} &= \frac{q_1(p_1-q_2)}{p_1(q_1-q_2)}, \nonumber\\
\frac{q(1-r)}{r(1-q)} &= \frac{q_2(p_1-p_2)}{p_2(p_1-q_2)}, & \frac{p(1-r)}{p-r} &= \frac{q_1(p_1-p_2)}{p_1(q_1-p_2)}, &  \frac{pq(1-r)}{(p-q)r} &= \frac{q_1q_2(p_1-p_2)}{p_1p_2(q_1-q_2)}\nonumber\\
\frac{q(p-r)}{r(p-q)} & = \frac{q_2(q_1-p_2)}{p_2(q_1-q_2)}, & \frac{q(p-1)}{p-q} & = \frac{q_2(q_1-p_1)}{p_1(q_1-q_2)}\label{eq:260727-4}
\end{align}

Recall that the function
$\varphi$ is given by
\eqref{def-varphi}. To characterize the embedding $\LM_{p_1,q_1}(v_1, w_1) \hookrightarrow \LM_{p_2,q_2}(v_2, w_2)$, we rely on the function $V_r(t,s)$ from \cite[Equation~(2.1)]{GU-AMP-Ces}, as recalled below. For the case $p_2 < p_1$ (which corresponds to $r < 1$),  we pass to spherical coordinates to obtain:
\begin{align} \label{eq:260727-5}
V_r(t,s) &= \bigg(\int_{t}^{s} v(\tau)^{\frac{1}{1-r}} \, d\tau \bigg)^{\frac{1-r}{r}} = \bigg(\int_{t}^{s} \varphi(\tau)^{\frac{p_1}{p_1-p_2}} \, d\tau \bigg)^{\frac{p_1-p_2}{p_2}} \nonumber\\ 
& = \bigg(\int_{t}^{s} \left( \int_{S^{n-1}} \left[ v_1(\tau y')^{-1} v_2(\tau y') \right]^{\frac{p_1p_2}{p_1-p_2}} d\sigma(y') \right) \tau^{n-1} \, d\tau \bigg)^{\frac{p_1-p_2}{p_2}} \nonumber\\ 
& = \bigg(\int_{B(0,s)\setminus B(0,t)} \left[ v_1(y)^{-1} v_2(y) \right]^{\frac{p_1p_2}{p_1-p_2}} dy \bigg)^{\frac{p_1-p_2}{p_2}}. 
\end{align}

Similarly, the limiting case $p_1 = p_2$ corresponds to $r=1$. In this case, the functional $V_1(t,s)$ becomes
\begin{align}  \label{eq:260727-6}
V_1(t,s) &= \esup\limits_{\tau \in (t, s)} v(\tau) = \esup\limits_{\tau \in (t, s)} \varphi(\tau) \nonumber\\ 
&= \esup\limits_{\tau \in (t, s)} \esup_{x' \in S^{n-1}} \left[ v_1(\tau x')^{-1} v_2(\tau x') \right]^{p_1}
\nonumber\\ 
&= \esup\limits_{x \in B(0,s)\setminus B(0,t)}  \left[ v_1(x)^{-1} v_2(x) \right]^{p_1}. 
\end{align}
Therefore, by substituting these algebraic relations together with the explicit expressions 
\eqref{eq:260727-5}
and
\eqref{eq:260727-6}
for the functionals
$V_r(t,s)$ (and $V_1(t,s)$) into the corresponding cases of
\cite[Theorem~2.1]{GU-AMP-Ces}, we obtain the constants
$C_1,\dots,C_{14}$.
The table below indicates the correspondence between the cases of
Theorem~3.1 and those of \cite[Theorem~2.1]{GU-AMP-Ces}.
Note that the case $r=1$ cannot occur in
\cite[Theorem~2.1(iv)]{GU-AMP-Ces}.
Remark also that
\[
\frac{pr}{p-r}
=
\frac{p}{p-r}
=
\frac{p}{p-1}
\]
in (viii).
This establishes the required norm equivalences in all cases
(i)--(xiii), thereby completing the proof.
\begin{table}[htbp]
\centering
\begin{tabular}{lll}
\hline
\textbf{Theorem 3.1} &
\textbf{\cite[Theorem~2.1]{GU-AMP-Ces}} &
\textbf{condition} 
 \\
\hline
(i) &
(i) &
$p \le r<1 \le q$\\
(ii) &
(ii) &
$p\le q<1$, $p\le r<1$ \\
(iii) &
(iii) &
$r<p\le q$, $r<1\le q$ \\
(iv) &
(iv) &
$r<p\le q<1$ \\
(v) &
(v) &
$q<p\le r<1$ \\
(vi) &
(vi) &
$q<1$, $q<p$, $r<p$, $r<1$ \\
(vii) &
(vii) &
$r<1\le q<p$ \\
(viii) &
(i) &
$p \le r=1 \le q$ \\
(ix) &
(ii) &
$p\le q<r=1$ \\
(x) &
(iii) &
$r=1<p\le q$ \\
(xi) &
(v) &
$q<p\le r=1$ \\
(xii) &
(vi) &
$q<r=1<p$ \\
(xiii) &
(vii) &
$r=1\le q<p$ \\
* &
$r=1<p\le q<1$ &
impossible \\
\hline
\end{tabular}

\bigskip

\noindent
\end{table}
\end{proof}
\newpage

\subsection{\texorpdfstring{$\dual \LM_{p_1,q_1}(v_1, w_1) \hookrightarrow \LM_{p_2,q_2}(v_2, w_2)$}{dual LM(p1,q1) to LM(p2,q2)}}

Denote by ${\Id}$
the embedding operator
if it exists and define
\[\|\Id\|_{\dual \LM_{p_1,q_1}(v_1, w_1) \rightarrow \LM_{p_2,q_2}(v_2, w_2)}=\sup_{f\in \dual \LM_{p_1,q_1}(v_1, w_1)\setminus \{0\} }\frac{\|f\|_{ \LM_{p_2,q_2}(v_2, w_2)}}{ \|f\|_{\dual \LM_{p_1,q_1}(v_1, w_1)}}.\]

\begin{theorem}\label{T:cLM-LM}
Let $0 < p_1, p_2, q_1, q_2 < \infty$. Assume that $w_1$ and $w_2$ are weights on $(0, \infty)$ such that $\int_0^t w_1^{q_1} < \infty$ and $ \int_t^{\infty} w_2^{q_2} < \infty$ for all $t \in (0, \infty)$ and $v_1$ and $v_2$ are  weights on $\mathbb{R}^n$.

{\rm(i)} If  $q_1\le p_2< p_1 \le q_2$, then \eqref{cLM-LM} holds for all $f \in \mathcal{M}^+(\mathbb{R}^n)$ if and only if 
\begin{equation}\label{C1-2}
\mathcal{C}_1 :=  \sup_{t\in (0,\infty)} \bigg(\int_0^t w_1^{q_1} \bigg)^{-\frac{1}{q_1}}  \bigg( \int_t^{\infty} w_2^{q_2} \bigg)^{\frac{1}{q_2}} \bigg(\int_{B(0,t)} \left[ v_1^{-1} v_2 \right]^{\frac{p_1p_2}{p_1-p_2}} \bigg)^{\frac{p_1-p_2}{p_1p_2}} 
\end{equation}
is finite.
Moreover, 
\[
\|\Id\|_{\dual \LM_{p_1,q_1}(v_1, w_1) \rightarrow \LM_{p_2,q_2}(v_2, w_2)} \approx \mathcal{C}_1.
\]

{\rm(ii)} If  $q_1\le q_2 < p_1$, and $q_1 \le p_2 < p_1$ then \eqref{cLM-LM} holds for all $f \in \mathcal{M}^+(\mathbb{R}^n)$ if and only if $\mathcal{C}_1 <\infty$, where $\mathcal{C}_1$ is defined in \eqref{C1-2} and
\begin{align}
\mathcal{C}_2 :=  \sup_{t\in (0,\infty)} &\bigg(\int_0^{t} w_1^{q_1} \bigg)^{-\frac{1}{q_1}}  \bigg( \int_0^{t} \bigg( \int_s^{\infty} w_2^{q_2} \bigg)^{\frac{q_2}{p_1-q_2}} w_2(s)^{q_2}\notag \\
&\times \bigg(\int_{B(0,s)} \left[ v_1^{-1} v_2 \right]^{\frac{p_1p_2}{p_1-p_2}} \bigg)^{\frac{(p_1-p_2)q_2}{(p_1-q_2)p_2}}\,ds \bigg)^{\frac{p_1-q_2}{p_1q_2}} 
\label{C2-2}
\end{align}
is finite.
Moreover, 
\[
\|\Id\|_{\dual \LM_{p_1,q_1}(v_1, w_1) \rightarrow \LM_{p_2,q_2}(v_2, w_2)} \approx \mathcal{C}_1+ \mathcal{C}_2.
\]

{\rm(iii)} If  $p_2 < q_1 \le q_2$, and $p_2 < p_1 \le q_2$ then \eqref{cLM-LM} holds for all $f \in \mathcal{M}^+(\mathbb{R}^n)$ if and only if $\mathcal{C}_1 < \infty$, where $\mathcal{C}_1$ is defined in \eqref{C1-2} and
\begin{align}
\mathcal{C}_3 :=  \sup_{t \in (0,\infty)} &\bigg(\int_t^{\infty} w_2^{q_2} \bigg)^{\frac{1}{q_2}}  \bigg(\int_0^{t} \bigg(\int_0^s w_1^{q_1}\bigg)^{-\frac{q_1}{q_1-p_2}} w_1(s)^{q_1}\notag \\
&\times\bigg(\int_{B(0,s)} \left[ v_1^{-1} v_2 \right]^{\frac{p_1p_2}{p_1-p_2}} \bigg)^{\frac{(p_1-p_2)q_1}{(q_1-p_2)p_1}}\,ds  \bigg)^{\frac{q_1-p_2}{q_1p_2}}  \label{C3-2}
\end{align}
is finite.
Moreover, 
\[
\|\Id\|_{\dual \LM_{p_1,q_1}(v_1, w_1) \rightarrow \LM_{p_2,q_2}(v_2, w_2)} \approx \mathcal{C}_1 + \mathcal{C}_3.
\]

{\rm(iv)} If $p_2 < q_1 \le q_2 < p_1$
then \eqref{cLM-LM} holds for all $f \in \mathcal{M}^+(\mathbb{R}^n)$ if and only if $\mathcal{C}_1<\infty$,
$\mathcal{C}_2 < \infty$ and $\mathcal{C}_3< \infty$, where $\mathcal{C}_1$, $\mathcal{C}_2$ and $\mathcal{C}_3$ are defined in \eqref{C1-2}, \eqref{C2-2} and \eqref{C3-2}, respectively. Moreover, 
\[
\|\Id\|_{\dual \LM_{p_1,q_1}(v_1, w_1) \rightarrow \LM_{p_2,q_2}(v_2, w_2)} \approx \mathcal{C}_1 + \mathcal{C}_2 + \mathcal{C}_3.
\]

{\rm(v)} If $q_2 < q_1 \leq p_2 < p_1$ then \eqref{cLM-LM} holds for all $f \in \mathcal{M}^+(\mathbb{R}^n)$ if and only if 
\begin{align*}
\mathcal{C}_4 := \bigg\{\int_0^{\infty}  & \bigg(\int_t^{\infty} w_2^{q_2} \bigg)^{\frac{q_2}{q_1-q_2}} w_2(t)^{q_2}  \notag \\
&\times \esup_{s\in (0, t)} \left[ \bigg(\int_0^s w_1^{q_1}\bigg)^{-\frac{q_2}{q_1-q_2}} \bigg(\int_{B(0,s)} \left[ v_1^{-1} v_2 \right]^{\frac{p_1p_2}{p_1-p_2}} \bigg)^{\frac{(p_1-p_2)q_1q_2}{(q_1-q_2)p_1p_2}}
\right]\,dt  \bigg\}^{\frac{q_1-q_2}{q_1q_2}} 
\end{align*}
and
\begin{align}\label{C5-2}
\mathcal{C}_5 & :=  \bigg\{\int_0^{\infty} \bigg(\int_0^t w_1^{q_1} \bigg)^{-\frac{q_1}{q_1-q_2}} w_1(t)^{q_1}  \notag\\ 
& \hspace{1cm} \times \bigg[ \int_0^{t} \bigg( \int_s^{t} w_2^{q_2} \bigg)^{\frac{q_2}{p_1-q_2}} w_2(s)^{q_2}\bigg(\int_{B(0,s)} \left[ v_1^{-1} v_2 \right]^{\frac{p_1p_2}{p_1-p_2}} \bigg)^{\frac{(p_1-p_2)q_2}{(p_1-q_2)p_2}} ds \bigg]^{\frac{q_1(p_1-q_2)}{p_1(q_1-q_2)}} dt \bigg\}^{\frac{q_1-q_2}{q_1q_2}} \notag \\
& \hspace{0.5cm}+  \bigg(\int_0^{\infty}  w_1^{q_1} \bigg)^{-\frac{1}{q_1}} \bigg\{ \int_0^{\infty} \bigg( \int_t^{\infty} w_2^{q_2} \bigg)^{\frac{q_2}{p_1-q_2}} w_2(t)^{q_2} \bigg(\int_{B(0,t)} \left[ v_1^{-1} v_2 \right]^{\frac{p_1p_2}{p_1-p_2}} \bigg)^{\frac{(p_1-p_2)q_2}{(p_1-q_2)p_2}} dt \bigg\}^{\frac{p_1-q_2}{p_1q_2}}
\end{align}
are finite.
Moreover, 
\[
\|\Id\|_{\dual \LM_{p_1,q_1}(v_1, w_1) \rightarrow \LM_{p_2,q_2}(v_2, w_2)} \approx \mathcal{C}_4 + \mathcal{C}_5.
\]

{\rm(vi)} If $q_2 < \min\{q_1, p_1\}$, and $p_2 < \min\{q_1, p_1\}$ then \eqref{cLM-LM} holds for all $f \in \mathcal{M}^+(\mathbb{R}^n)$ if and only if $\mathcal{C}_5< \infty$, where $\mathcal{C}_5$  is defined in \eqref{C5-2}, and
\begin{align}\label{C6-2}
\mathcal{C}_6 & :=  \bigg\{\int_0^{\infty} \bigg(\int_0^t w_1^{q_1} \bigg)^{-2} w_1(t)^{q_1}  \sup_{y \in (0, t)} \bigg(\int_0^y w_1^{q_1} \bigg) \bigg[\int_y^t w_2(s)^{q_2} \bigg(\int_s^{\infty} w_2^{q_2} \bigg)^{\frac{q_2}{q_1-q_2}} ds\bigg]  \\
& \hspace{0.7cm}\times\bigg[\int_0^y \bigg(\int_0^s w_1^{q_1} \bigg)^{-\frac{q_1}{q_1-p_2}} w_1(s)^{q_1} \bigg(\int_{B(0,s)} \left[ v_1^{-1} v_2 \right]^{\frac{p_1p_2}{p_1-p_2}}
\bigg)^{\frac{(p_1-p_2)q_1}{(q_1-p_2)p_1}} ds \bigg]^{\frac{q_2(q_1-p_2)}{p_2(q_1-q_2)}} dt \bigg\}^{\frac{q_1-q_2}{q_1q_2}}
\notag
\end{align}
is finite.
Moreover, 
\[
\|\Id\|_{\dual \LM_{p_1,q_1}(v_1, w_1) \rightarrow \LM_{p_2,q_2}(v_2, w_2)} \approx \mathcal{C}_5 + \mathcal{C}_6.
\]

{\rm(vii)} If $p_2 <p_1 \leq q_2 < q_1$ then \eqref{cLM-LM} holds for all $f \in \mathcal{M}^+(\mathbb{R}^n)$ if and only if $\mathcal{C}_6< \infty$, where $\mathcal{C}_6$ is  defined in \eqref{C6-2}, and 
\begin{align*}
\mathcal{C}_7 & := \bigg\{\int_0^{\infty} \bigg( \int_0^t w_1^{q_1} \bigg)^{-\frac{q_1}{q_1-q_2}} w_1(t)^{q_1} \\
& \hspace{1.5cm} \times
\bigg[\esup_{s \in (0,t)} \bigg( \int_s^{\infty} w_2^{q_2} \bigg)^{\frac{q_1}{q_1-q_2}} \bigg(\int_{B(0,s)} \left[ v_1^{-1} v_2 \right]^{\frac{p_1p_2}{p_1-p_2}} \bigg)^{\frac{(p_1-p_2)q_1q_2}{(q_1-p_2)p_1p_2}}
\bigg]dt \bigg\}^{\frac{q_1-q_2}{q_1q_2}}
\notag\\
& \hspace{0.5cm}+ \bigg(\int_0^{\infty}  w_1^{q_1} \bigg)^{-\frac{1}{q_1}} \esup_{t \in (0, {\infty})} \bigg( \int_t^{\infty} w_2^{q_2} \bigg)^{\frac{1}{q_2}} \bigg(\int_{B(0,t)} \left[ v_1^{-1} v_2 \right]^{\frac{p_1p_2}{p_1-p_2}} \bigg)^{\frac{p_1-p_2}{p_1p_2}}
\end{align*}
is finite. Moreover, 
\[
\|\Id\|_{\dual \LM_{p_1,q_1}(v_1, w_1) \rightarrow \LM_{p_2,q_2}(v_2, w_2)} \approx \mathcal{C}_6 + \mathcal{C}_7.
\]

{\rm(viii)} If $q_1\le p_2 = p_1 \le q_2$ then \eqref{cLM-LM} holds for all $f \in \mathcal{M}^+(\mathbb{R}^n)$ if and only if 
\begin{equation}\label{C8-2}
\mathcal{C}_8 :=  
\esup_{t\in (0,\infty)} \bigg(\int_0^t w_1^{q_1} \bigg)^{-\frac{1}{q_1}}  \bigg( \int_t^{\infty} w_2^{q_2} \bigg)^{\frac{1}{q_2}} 
\esup_{x\in B(0,t)} v_1(x)^{-1} v_2(x)  
\end{equation}
is finite.
Moreover, 
\[
\|\Id\|_{\dual \LM_{p_1,q_1}(v_1, w_1) \rightarrow \LM_{p_2,q_2}(v_2, w_2)} \approx \mathcal{C}_8.
\]

{\rm(ix)} If $q_1\le q_2 < p_1=p_2$, and $q_1 \le p_2 = p_1$ then \eqref{cLM-LM} holds for all $f \in \mathcal{M}^+(\mathbb{R}^n)$ if and only if $\mathcal{C}_8<\infty$, where $\mathcal{C}_8$ is defined in \eqref{C8-2}, and
\begin{align}
\mathcal{C}_9 &:=  \sup_{t\in (0,\infty)} \bigg(\int_0^{t} w_1^{q_1} \bigg)^{-\frac{1}{q_1}}   \notag \\
&\hspace{2cm} \times\bigg( \int_0^{t} \bigg( \int_s^{\infty} w_2^{q_2} \bigg)^{\frac{q_2}{p_1-q_2}} w_2(s)^{q_2} \esup_{x\in B(0,s)} \left[ v_1(x)^{-1} v_2(x) \right]^{\frac{p_1q_2}{p_1-q_2}}
\,ds \bigg)^{\frac{p_1-q_2}{p_1q_2}}  \label{C9-2}
\end{align}
is finite.
Moreover, 
\[
\|\Id\|_{\dual \LM_{p_1,q_1}(v_1, w_1) \rightarrow \LM_{p_2,q_2}(v_2, w_2)} \approx \mathcal{C}_8 + \mathcal{C}_9.
\]

{\rm(x)} If $p_1=p_2 < q_1 \le q_2$ then \eqref{cLM-LM} holds for all $f \in \mathcal{M}^+(\mathbb{R}^n)$ if and only if $\mathcal{C}_8 < \infty$, where $\mathcal{C}_8$ is defined in \eqref{C8-2} and
\begin{align}
\mathcal{C}_{10} &:= \sup_{t \in (0,\infty)} \bigg(\int_t^{\infty} w_2^{q_2} \bigg)^{\frac{1}{q_2}}  \notag \\
& \times\bigg\{\int_0^{t} \bigg(\int_0^s w_1^{q_1} \bigg)^{-\frac{q_1}{q_1-p_2}} w_1(s)^{q_1} \esup_{x\in B(0,s)} \left[ v_1(x)^{-1} v_2(x) \right]^{\frac{q_1p_2}{q_1-p_2}} ds \bigg\}^{\frac{q_1-p_2}{q_1p_2}} \label{C10-2}
\end{align}
is finite.
Moreover, 
\[
\|\Id\|_{\dual \LM_{p_1,q_1}(v_1, w_1) \rightarrow \LM_{p_2,q_2}(v_2, w_2)} \approx \mathcal{C}_8 + \mathcal{C}_{10}.
\]

{\rm(xi)} If $q_2 < q_1 \leq p_2 = p_1$ then \eqref{cLM-LM} holds for all $f \in \mathcal{M}^+(\mathbb{R}^n)$ if and only if 
\begin{align*}
\mathcal{C}_{11} :=  \bigg(\int_0^{\infty}  & \bigg(\int_t^{\infty} w_2^{q_2} \bigg)^{\frac{q_2}{q_1-q_2}} w_2(t)^{q_2}  \esup_{s\in (0, t)}  \bigg(\int_0^s w_1^{q_1} \bigg)^{-\frac{q_2}{q_1-q_2}} \notag \\
&\times \esup_{x\in B(0,s)} \left[ v_1(x)^{-1} v_2(x) \right]^{\frac{q_1q_2}{q_1-q_2}} dt \bigg)^{\frac{q_1-q_2}{q_1q_2}} 
\end{align*}
and
\begin{align} \label{C12-2}
\mathcal{C}_{12} & :=  \bigg(\int_0^{\infty} \bigg(\int_0^t w_1^{q_1} \bigg)^{-\frac{q_1}{q_1-q_2}} w_1(t)^{q_1} \bigg( \int_0^{t} \bigg( \int_s^{t} w_2^{q_2} \bigg)^{\frac{q_2}{p_1-q_2}} w_2(s)^{q_2} \notag\\
& \hspace{3cm} \times \esup_{x\in B(0,s)} \left[ v_1(x)^{-1} v_2(x) \right]^{\frac{p_1q_2}{p_1-q_2}} ds \bigg)^{\frac{q_1(p_1-q_2)}{p_1(q_1-q_2)}} dt \bigg)^{\frac{q_1-q_2}{q_1q_2}} \\
& \hspace{0.5cm} +   \bigg(\int_0^{\infty}  w_1^{q_1} \bigg)^{-\frac{1}{q_1}} \bigg( \int_0^{\infty} \bigg( \int_t^{\infty} w_2^{q_2} \bigg)^{\frac{q_2}{p_1-q_2}} w_2(t)^{q_2} \esup_{x\in B(0,t)} \left[ v_1(x)^{-1} v_2(x) \right]^{\frac{p_1q_2}{p_1-q_2}} dt \bigg)^{\frac{p_1-q_2}{p_1q_2}}\notag
\end{align}
are finite.
Moreover, 
\[
\|\Id\|_{\dual \LM_{p_1,q_1}(v_1, w_1) \rightarrow \LM_{p_2,q_2}(v_2, w_2)} \approx \mathcal{C}_{11} + \mathcal{C}_{12}.
\]

{\rm(xii)} If $q_2 <  p_1 = p_2<  q_1$, then \eqref{cLM-LM} holds for all $f \in \mathcal{M}^+(\mathbb{R}^n)$ if and only if $\mathcal{C}_{12}< \infty$ where  $\mathcal{C}_{12}$  is defined in  \eqref{C12-2}, and
\begin{align}\label{C13-2}
\mathcal{C}_{13}  & :=  \bigg(\int_0^{\infty} \bigg(\int_0^t w_1^{q_1} \bigg)^{-2} w_1(t)^{q_1}  \sup_{y \in (0, t)} \bigg(\int_0^y w_1^{q_1} \bigg) \bigg(\int_y^t w_2(s)^{q_2} \bigg(\int_s^{\infty} w_2^{q_2} \bigg)^{\frac{q_2}{q_1-q_2}}ds \bigg) \notag \\
& \hspace{1cm}\times \bigg(\int_0^y \bigg(\int_0^s w_1^{q_1} \bigg)^{-\frac{q_1}{q_1-p_2}} w_1(s)^{q_1} \esup_{x\in B(0,s)} \left[ v_1(x)^{-1} v_2(x) \right]^{\frac{q_1p_2}{q_1-p_2}}  ds \bigg)^{\frac{q_2(q_1-p_2)}{p_2(q_1-q_2)}}  dt \bigg)^{\frac{q_1-q_2}{q_1q_2}}
\end{align}
is finite.
Moreover, 
\[
\|\Id\|_{\dual \LM_{p_1,q_1}(v_1, w_1) \rightarrow \LM_{p_2,q_2}(v_2, w_2)} \approx \mathcal{C}_{12}+ \mathcal{C}_{13}.
\]

{\rm(xiii)} If $p_2 = p_1 \leq q_2 < q_1$ then \eqref{cLM-LM} holds for all $f \in \mathcal{M}^+(\mathbb{R}^n)$ if and only if $\mathcal{C}_{13} < \infty$, where  $\mathcal{C}_{13}$ is  defined in  \eqref{C13-2}, and 
\begin{align*}
\mathcal{C}_{14} & := \bigg(\int_0^{\infty} \bigg(\int_0^t w_1^{q_1} \bigg)^{-\frac{q_1}{q_1-q_2}} w_1(t)^{q_1} \\
& \hspace{2cm} \times \esup_{s \in (0,t)} \bigg( \int_s^{\infty} w_2^{q_2} \bigg)^{\frac{q_1}{q_1-q_2}} \esup_{x\in B(0,t)} \left[ v_1(x)^{-1} v_2(x) \right]^{\frac{q_1q_2}{q_1-q_2}} dt \bigg)^{\frac{q_1-q_2}{q_1q_2}}
\notag\\
& \hspace{1cm}+ \bigg(\int_0^{\infty}  w_1^{q_1} \bigg)^{-\frac{1}{q_1}} \esup_{t \in (0, {\infty})} \bigg( \int_t^{\infty} w_2^{q_2} \bigg)^{\frac{1}{q_2}} \esup_{x\in B(0,t)} v_1(x)^{-1} v_2(x)
\end{align*}
is finite. Moreover, 
\[
\|\Id\|_{\dual \LM_{p_1,q_1}(v_1, w_1) \rightarrow \LM_{p_2,q_2}(v_2, w_2)} \approx \mathcal{C}_{13} + \mathcal{C}_{14}.
\]
\end{theorem}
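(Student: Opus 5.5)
The proof will run parallel to that of Theorem~\ref{T:LM-LM}, with Theorem~\ref{T:cLM-LM-Copson-Cesaro} replacing Theorem~\ref{T:Morrey-Cesaro}, and with the one-dimensional Copson--Ces\`aro characterization of \cite{GPU-JFA} replacing \cite[Theorem~2.1]{GU-AMP-Ces}.

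First, the embedding \eqref{cLM-LM} is the same as inequality \eqref{cLM-LM-emb} for all $f\in\mathcal{M}^+(\mathbb{R}^n)$. The hypotheses of Theorem~\ref{T:cLM-LM-Copson-Cesaro} hold: $\int_0^t w_1^{q_1}<\infty$, $\int_t^\infty w_2^{q_2}<\infty$, and we may take $p_2\le p_1$. That theorem then shows \eqref{cLM-LM-emb} is equivalent to \eqref{Copson-Cesaro-emb}, with the same best constant.

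Next, replace $g^{p_1}$ by $h$ and raise both sides to the power $p_1$. This turns \eqref{Copson-Cesaro-emb} into
\[
\bigg(\int_0^\infty\bigg(\int_0^t h^{r}v\bigg)^{\frac qr}u\bigg)^{\frac1q}\le \mathcal{C}^{p_1}\bigg(\int_0^\infty\bigg(\int_t^\infty h\bigg)^{p}w\bigg)^{\frac1p},
\]
where $p,q,r$ are as in \eqref{pqr-change} and $v,u,w$ as in \eqref{uvw-change}. This is an embedding of a Copson space $\Cop_{1,p}$ into a Ces\`aro space $\Ces_{r,q}$, which is the inequality characterized in \cite{GPU-JFA} (the Copson--Ces\`aro theorem there, including the limiting case $r=1$).

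Then I translate the conditions. For $r<1$, the quantity $V_r(0,t)=\big(\int_0^t v^{1/(1-r)}\big)^{(1-r)/r}$ is computed in polar coordinates exactly as in \eqref{eq:260727-5}, with lower limit $0$, and becomes $\big(\int_{B(0,t)}[v_1^{-1}v_2]^{\frac{p_1p_2}{p_1-p_2}}\big)^{\frac{p_1-p_2}{p_2}}$. For $r=1$, \eqref{eq:260727-6} gives $\esup_{x\in B(0,t)}[v_1^{-1}v_2]^{p_1}$. The exponents are converted with \eqref{eq:260727-4}, and taking the $1/p_1$-th power returns us to the original scale. In the Copson-type condition the weight appears through $W(t)=\int_0^t w_1^{q_1}$ rather than $\int_t^\infty$. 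This is why the $\mathcal{C}_i$ contain $\big(\int_0^t w_1^{q_1}\big)^{-\cdot}$, and why the terms with $\big(\int_0^\infty w_1^{q_1}\big)^{-1/q_1}$ appear: they come from the limit $W(\infty)$. I would record a correspondence table of cases (i)--(xiii) against the cases of the cited theorem, as in the previous proof, and check that $r=1<p\le q<1$ is impossible.

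The main obstacle is bookkeeping: matching each parameter regime of \cite{GPU-JFA} to the right case, and checking that every constant transforms correctly. Particular care is needed for the extra boundary terms at infinity in $\mathcal{C}_5,\mathcal{C}_7,\mathcal{C}_{12},\mathcal{C}_{14}$, and for the $(\int_0^t w_1^{q_1})^{-2}\sup_y\int_0^y w_1^{q_1}$ structure of $\mathcal{C}_6,\mathcal{C}_{13}$. Where a case of the cited result is phrased through a level function or a discretized form, I would first rewrite it in the integral form used there before substituting.
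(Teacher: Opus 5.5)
Your proposal is correct and matches the paper's own proof: reduce via Theorem~\ref{T:cLM-LM-Copson-Cesaro} to \eqref{Copson-Cesaro-emb}, set $h=g^{p_1}$ with the parameters \eqref{pqr-change} and weights \eqref{uvw-change}, invoke \cite[Theorem~2.1]{GPU-JFA}, and evaluate $V_r(0,s)$ and $V_1(0,s)$ by putting the lower limit $0$ in \eqref{eq:260727-5} and \eqref{eq:260727-6}. When you do the bookkeeping you will run into two apparent typos in the statement, not gaps in your argument: the conversion through \eqref{eq:260727-4} gives the exponent $\frac{(p_1-p_2)q_1q_2}{p_1p_2(q_1-q_2)}$ in the first term of $\mathcal{C}_7$ (the printed $\frac{(p_1-p_2)q_1q_2}{(q_1-p_2)p_1p_2}$ scales like $\lambda^{(q_1-q_2)/(q_1-p_2)}$ under $v_2\mapsto\lambda v_2$, whereas the embedding norm scales like $\lambda$), and the first term of $\mathcal{C}_{14}$ should carry $\esup_{x\in B(0,s)}$ inside the supremum over $s$ rather than $\esup_{x\in B(0,t)}$.
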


\begin{proof}
The proof follows a similar strategy to that of Theorem~\ref{T:LM-LM}. First, we recall that the embedding \eqref{cLM-LM} implies that the identity operator is bounded from $\dual\LM_{p_1,q_1}(v_1, w_1)$ to $\LM_{p_2,q_2}(v_2, w_2)$.
Recall again that the function
$\varphi$ is given by
\eqref{def-varphi}.

By utilizing Theorem~\ref{T:cLM-LM-Copson-Cesaro},  \eqref{cLM-LM-emb} holds for all $f \in \mathcal{M}^+(\mathbb{R}^n)$ if and only if \eqref{Copson-Cesaro-emb} holds for all $g \in \mathcal{M}^+(0,\infty)$. Following a similar substitution to the previous proof, we can express this equivalent inequality in the following form suitable for applying known weight characterizations:
\begin{align}\label{eq:260727-3}
\bigg(\int_0^{\infty} \bigg(\int_0^t h(s)^{\frac{p_2}{p_1}} \varphi(s) \, ds\bigg)^{\frac{q_2}{p_2}} w_2(t)^{q_2} dt 
\bigg)^{\frac{p_1}{q_2}} \le \mathcal{C}^{p_1} \bigg( \int_0^{\infty} \bigg(\int_t^{\infty} h(s) \, ds \bigg)^{\frac{q_1}{p_1}} 
w_1(t)^{q_1} dt \bigg)^{\frac{p_1}{q_1}}.
\end{align}

To evaluate this inequality, we invoke \cite[Theorem~2.1]{GPU-JFA}. We set the parameters exactly as in \eqref{pqr-change}:
\[
p= \frac{q_1}{p_1}, \quad q= \frac{q_2}{p_1}, \quad r=\frac{p_2}{p_1},
\]
and define the associated weight functions exactly as in \eqref{uvw-change}:
\[
v(s)= \varphi(s), \quad u(t)= w_2(t)^{q_2}, \quad w(t)= w_1(t)^{q_1}.
\]
So \eqref{eq:260727-3} reads
\[
\bigg(\int_0^{\infty} \bigg(\int_0^t f(s)^r \varphi(s) \, ds\bigg)^{\frac{q}{r}} u(t) dt 
\bigg)^{\frac{1}{q}} \le \mathcal{C}^{p_1} \bigg( \int_0^{\infty} \bigg(\int_t^{\infty} f(s) \, ds \bigg)^{p} 
w(t) dt \bigg)^{\frac{1}{p}}.
\]
The algebraic relations among the parameters $p, q,$ and $r$ remain identical to those listed in the proof of Theorem~\ref{T:LM-LM}.
See \eqref{eq:260727-4}.

The primary distinction in this case lies in the functional $V_r$ (and its limiting case $V_1$), which arises from the integration domains of the operators considered in \cite[Theorem~2.1]{GPU-JFA}. Unlike \cite[Theorem~2.1]{GPU-JFA}, where the functional $V_r(t,s)$ is used, in Theorem~\ref{T:cLM-LM}, we require only the special case $V_r(0,s)$. For the case $p_2<p_1$ (equivalently, $r<1$), passing to spherical coordinates yields a functional involving integration over the entire ball $B(0,s)$, rather than over an annulus. Indeed, setting $t=0$ in \eqref{eq:260727-5} gives
\begin{align*} 
V_r(0,s) & = \bigg(\int_{B(0,s)} \left[ v_1(x)^{-1} v_2(x) \right]^{\frac{p_1p_2}{p_1-p_2}} dx \bigg)^{\frac{p_1-p_2}{p_2}}. 
\end{align*}

Similarly, for the limiting case $p_1 = p_2$ (corresponding to $r=1$), the functional evaluates the essential supremum over the ball $B(0,s)$:
Letting $t=0$ in \eqref{eq:260727-6},
we obtain
\begin{align*} 
V_1(0,s) 
&= \esup\limits_{x \in B(0,s)}  \left[ v_1(x)^{-1} v_2(x) \right]^{p_1}. 
\end{align*}

Substituting these algebraic relations, the weight functions $u, v, w$, and the explicit expressions for $V_r(0,s)$ and $V_1(0,s)$ into the respective cases of \cite[Theorem~2.1]{GPU-JFA} generates the conditions $\mathcal{C}_1$ through $\mathcal{C}_{14}$. This establishes the required norm equivalences for all cases (i) through (xiii) and completes the proof.
\end{proof}

\section*{Acknowledgments}
The research of Gogatishvili was supported by the Institute of Mathematics of the Czech Academy of Sciences by RVO: 67985840. and  by the
grant Ministry of Education and Science of the Republic of Kazakhstan (project no. AP26196065)


\end{document}